\newtheorem{thm}{Theorem}[section]
\newtheorem{lem}[thm]{Lemma}
\newtheorem{prop}[thm]{Proposition}
\newtheorem{cor}[thm]{Corollary}
\theoremstyle{definition}
\theoremstyle{remark}
\title{Effectiveness and strong graph indivisibility}
\author{Damir Dzhafarov, Reed Solomon, and Andrea Volpi}
\thanks{Dzhafarov and Solomon were partially supported by a Focused Research Group grant from the National Science Foundation of the 
United States, DMS-1854355. Volpi was partially funded by PRIN2022 ``Models, sets and classifications'' 2022TECZIA CUP:G53D23001890006 - M4 C2 I1.1.}
\begin{document}

\begin{abstract}
	A relational structure is \emph{strongly indivisible} if for every partition $M = X_0 \sqcup X_1$, the induced substructure on $X_0$ or $X_1$ is isomorphic to $\mathcal{M}$. Cameron (1997) showed that a graph is strongly indivisible if and only if it is the complete graph, the completely disconnected graph, or the random graph. We analyze the strength of Cameron's theorem using tools from computability theory and reverse mathematics. We show that Cameron's theorem is is effective up to computable presentation, and give a partial result towards showing that the full theorem holds in the $\omega$-model $\mathsf{REC}$. We also establish that Cameron's original proof makes essential use of the stronger induction scheme $\mathsf{I}\Sigma^0_2$.
\end{abstract}

\maketitle

\section{Introduction}
\label{sec:introduction}

Versions of Ramsey's theorem have been a consistent source of important principles in computable combinatorics and reverse mathematics. In recent years, 
Weihrauch reducibility and its variants have given new tools and perspectives to study uniformity questions in these contexts and to make fine distinction between proof methods. 
The majority of this work considers subsets of size $n \geq 2$ with an emphasis on $n=2$. However, even the $n=1$ cases, or pigeonhole principles, often have subtle 
connections to uniformity and induction. 

The canonical case is Ramsey's theorem for singletons: every finite coloring $c: \omega \rightarrow k$ has an infinite monochromatic set. 
On the reverse math side, for fixed $k$, $\mathsf{RT}^1_k$ is provable in $\mathsf{RCA}_0$, while Hirst \cite{hirst} showed the full result $\mathsf{RT}^1$ 
is equivalent to the induction scheme $\mathsf{B}\Sigma^0_2$. On the Weihrauch side, non-reductions were found for a variety of reducibilities by 
Dorais, Dzhafarov, Hirst, Mileti and Shafer \cite{DDHM}, Brattka and Rakotoniaina \cite{BR}, Hirschfeldt and Jockusch \cite{HJ}, Patey \cite{patey}, and 
Dzhafarov, Patey, Solomon and Westrick \cite{DPSW}. 

Adding some structure, Chubb, Hirst and McNicholl \cite{CHM} introduced a tree version of $\mathsf{RT}^1$: every coloring $c: 2^{<\omega} \rightarrow k$ has 
a monochromatic subset $H$ isomorphic to $2^{< \omega}$ as a partial order. In reverse math, this principle is denoted $\mathsf{TT}^1$ and was shown to 
lie strictly between the induction schemes $\mathsf{B}\Sigma^0_2$ and $\mathsf{I}\Sigma^0_2$ 
by Corduan, Groszek and Mileti \cite{CGM} and Chong, Li, Wei and Yang \cite{CLWY}. Interestingly, Ko{\l}odziejczyck has a proof (reproduced in 
\cite{DSV}) using results from \cite{CLWY} and \cite{CGM} to show that $\mathsf{TT}^1$, unlike $\mathsf{RT}^1$, is not equivalent to any arithmetical statement. 
Dzhafarov, Solomon and Valenti \cite{DSV} give a detailed account of the relationship between the singleton versions of Ramsey's theorem and the tree theorem in the 
Weihrauch degrees including a Weihrauch version of Ko{\l}odziejczyck's result. 

More generally, a relational structure $\mathcal{M}$ is \textit{indivisible} if for every finite coloring $c: M \rightarrow k$ of the domain of $\mathcal{M}$, there is a 
monochromatic subset $H$ of $M$ such that the induced substructure on $H$ is isomorphic to $\mathcal{M}$. In this setting, 
Gill \cite{Gill} considers a variety of  Weihrauch problems related to indivisible structures such as the dense linear order $(\mathbb{Q},\leq)$ and the random graph $\mathcal{R}$. 

There is a significant difference between, on one hand, Ramsey's theorem for singletons and the indivisibility of $\mathcal{R}$, and on the other hand, 
$\mathsf{TT}^1$ and the indivisibility of $(\mathbb{Q},\leq)$. For a coloring $c: \omega \rightarrow k$, there is a color $i$ such that $c^{-1}(i)$ is an infinite 
monochromatic set. 
That is, one of the colors is the desired subset for $\mathsf{RT}^1$. Similarly, for $c: \mathcal{R} \rightarrow k$, one of the colors is an isomorphic copy of $\mathcal{R}$, a fact 
first pointed out by Henson \cite{Henson}. 
However, there are colorings of $2^{<\omega}$ and $\mathbb{Q}$ such that no full color is isomorphic to $2^{<\omega}$ or $\mathbb{Q}$ as a partial or linear order 
respectively. For example, the $2$-coloring of $2^{< \omega}$ defined by $c(\sigma) = 0$ if and only if $|\sigma| \geq 1$, and the $2$-coloring of $\mathbb{Q}$ given by 
$c(q) = 0$ if and only if $q < 0$ or $q = 1$. 

Our concern here is with the stronger property that requires the full set of some color to yield an isomorphic substructure. 
In the combinatorics literature, colorings are often replaced with partitions, and a relational structure $\mathcal{M}$ is said to have the 
\textit{pigeonhole property} if for any finite partition 
$M = X_0 \sqcup \cdots \sqcup X_{k-1}$, there is an $i$ such that the induced substructure on $X_i$ is isomorphic to $\mathcal{M}$. Setting aside induction issues, 
this definition is often equivalently stated with partitions into two pieces, i.e.~$M = X_0 \sqcup X_1$. 

Unfortunately, the term pigeonhole property is frequently used in computability theory and reverse math for a Ramsey style property that only requires a 
subset of a color to 
induce an isomorphic structure. Therefore, to avoid terminological conflict, we say $\mathcal{M}$ is \textit{strongly indivisible} if for every partition $M = X_0 \sqcup X_1$, 
the induced substructure on $X_0$ or $X_1$ is isomorphic to $\mathcal{M}$. 

Strong indivisibility (under the name pigeonhole property) appears to have been introduced in Cameron \cite{Cam:97} where he proved there are exactly three 
strongly indivisible countable graphs: the complete graph $K_{\omega}$, the completely disconnected graph $\overline{K}_{\omega}$ and the random graph $\mathcal{R}$. 
There are similar classifications of the strongly indivisible tournaments, posets and linear orders in Bonato, Cameron and Deli\'{c} \cite{BCD00} as well as a  
study of the connection between Fra\"{i}sse limits and strong divisibility by Bonato and Deli\'{c} \cite{BD99}. 

Our goal is to examine Cameron's classification of the strongly indivisible countable graphs from the point of view of 
reverse mathematics and computable combinatorics. In Section \ref{sec:classical}, we give a classical proof of the classification, 
showing it can be done in $\textsf{ACA}_0$ and pointing out where it uses arithmetic comprehension and 
induction axioms beyond $\mathsf{I}\Sigma^0_1$. 

In Section \ref{sec:presentation}, we show the classification is effective up to computable presentation. That is, if $G$ is a computable graph not isomorphic to 
$K_{\omega}$, $\overline{K}_{\omega}$ or $\mathcal{R}$, then there is a computable copy $H$ of $G$ and a computable partition $H = X_0 \sqcup X_1$ such that 
neither the induced graph on $X_0$ nor the induced graph on $X_1$ is even classically isomorphic to $G$. We show the move from $G$ to $H$ is necessary to get 
this strong result 
by constructing a computable graph $G$ that is not isomorphic to $K_{\omega}$, $\overline{K}_{\omega}$ or $\mathcal{R}$, but for every computable partition 
$G = X_0 \sqcup X_1$, the induced subgraph on at least one of $X_0$ or $X_1$ is classically isomorphic to $G$. 

In Section \ref{sec:REC}, we provide a partial result towards showing the classification theorem holds in the $\omega$-model $\mathsf{REC}$. If $G$ is a computable 
graph that is not isomorphic to $K_{\omega}$, $\overline{K}_{\omega}$ or $\mathcal{R}$ and for which the set of vertices of finite degree is c.e., then we show 
there is a computable partition $G = X_0 \sqcup X_1$ such that neither $X_0$ nor $X_1$ is computably 
isomorphic to $G$. However, the full question of whether $\mathsf{REC}$ satisfies the classification theorem remains open. 

Finally, in Section \ref{sec:induction}, we return to Cameron's original proof of the classification. In second order arithmetic, the least number principle 
$\mathsf{L}\Sigma^0_2$ is the axiom scheme that asserts for each $\Sigma^0_2$ formula $\varphi(x)$, if there exists an $x$ such that $\varphi(x)$, then 
there is a least such $x$. Over $\mathsf{RCA}_0$, $\mathsf{L}\Sigma^0_2$ is equivalent to $\mathsf{I}\Sigma^0_2$, the usual induction scheme for 
$\Sigma^0_2$ formulas, and hence is not provable in $\mathsf{RCA}_0$. The classical proof of the classification theorem applies $\mathsf{L}\Sigma^0_2$ to a 
graph $G \not \cong \mathcal{R}$ to obtain the smallest size counterexample in $G$ to the extension property that characterizes the random graph. In 
Section \ref{sec:induction}, we show that the existence of a counterexample to the extension property of minimal size in every non-random graph is in fact equivalent to 
the full induction scheme $\mathsf{L}\Sigma^0_2$. 

Our notation follows the standard references such as Dzhafarov and Mummert \cite{DM22}, Hirschfeldt \cite{hirschfeldt}, Simpson \cite{sim:book}, and 
Soare \cite{soa:book}. We introduce the graph theory terminology in the next section.

\section{The classical proof}
\label{sec:classical}

In this section, we give Cameron's proof classifying the countable strongly indivisible graphs with an eye towards formalizing it in reverse mathematics. 
In the context of a model of a subsystem of second order arithmetic, we let $\mathbb{N}$ denote the first order part of the model, with the understanding that 
$\mathbb{N}$ is standard when we  work outside a formal setting. 

Formally, a \textit{graph} $G = (V,E)$ is a pair consisting of a nonempty set $V \subseteq \mathbb{N}$ of vertices and an irreflexive, symmetric edge relation $E$. 
For $X \subseteq V$, the \textit{induced subgraph} is $(X, E \! \upharpoonright \! X \times X)$. We frequently abuse notation by equating a graph with its 
domain (e.g.~writing $X \subseteq G$), by conflating a set of vertices with its induced subgraph, and by using subgraph to mean induced subgraph.

A vertex $x$ is \textit{isolated} if $\neg E(x,y)$ for all $y \in G$ and is \textit{universal} if $E(x,y)$ for all $y \neq x$ in $G$.
A countable graph $G = (V,E)$ is \textit{strongly indivisible} if for every vertex partition $V = X_0 \sqcup X_1$, either $X_0 \cong G$ or $X_1 \cong G$. 

For $n > 0$, $K_n$ denotes the complete graph on $n$ vertices. 
$K_{\omega}$ denotes the complete graph with $V = \mathbb{N}$ and $E = \{ \langle n,m \rangle : m \neq n \}$, and $\overline{K}_{\omega}$ denotes 
the completely disconnected graph with $V = \mathbb{N}$ and $E = \emptyset$. $\mathsf{RCA}_0$ proves that each of these graphs is  
strongly indivisible. 

\begin{prop}[$\mathsf{RCA}_0$]
$K_\omega$ and $\overline{K}_\omega$ are strongly indivisible. 
\end{prop}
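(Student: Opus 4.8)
The plan is to reduce both statements to a single observation: if a graph has edge relation equal to ``all distinct pairs'' (as in $K_\omega$) or to ``no pairs'' (as in $\overline{K}_\omega$), then every bijection of its vertex set onto the vertex set of an induced subgraph of the same type is automatically an isomorphism. So fix a partition $V = X_0 \sqcup X_1$ of $\mathbb{N}$. Reasoning in $\mathsf{RCA}_0$, a set is bounded or unbounded by classical logic, and if $X_0$ and $X_1$ were both bounded, say by $a$ and $b$, then $\mathbb{N} = X_0 \cup X_1 \subseteq \{0, \dots, \max(a, b)\}$, which is absurd. Hence at least one $X_i$ is unbounded, and I will show that this $X_i$ is isomorphic to $K_\omega$ (respectively $\overline{K}_\omega$).

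Next I would invoke the standard fact, provable in $\mathsf{RCA}_0$ using only $\mathsf{I}\Sigma^0_1$, that an unbounded set $X \subseteq \mathbb{N}$ carries a principal (increasing enumeration) function $f\colon \mathbb{N} \to X$, defined by $f(0) = \min X$ and $f(n+1) = \min\{x \in X : x > f(n)\}$. The recursion goes through because $X$ is unbounded, so $f$ exists as a set, and $\Sigma^0_1$ induction shows $f$ is injective and surjective onto $X$. In particular $f$ is a bijection from the vertex set of $K_\omega$ (namely $\mathbb{N}$) onto the vertex set of the induced subgraph on $X_i$.

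It remains to check that $f$ is a graph isomorphism. For $K_\omega$ we have $E(n, m)$ iff $n \ne m$, and in the induced subgraph on $X_i \subseteq K_\omega$ there is an edge between $f(n)$ and $f(m)$ iff $f(n) \ne f(m)$, i.e.\ iff $n \ne m$ since $f$ is injective; so $E(n, m) \leftrightarrow E(f(n), f(m))$ and $X_i \cong K_\omega$. The argument for $\overline{K}_\omega$ is identical, with ``$E$ holds of no pair'' replacing ``$E$ holds of every distinct pair'': the induced subgraph on $X_i$ is then edgeless, and the same $f$ is an isomorphism. There is no real obstacle in this argument; the only steps needing care are the bounded/unbounded dichotomy and the existence and bijectivity of the principal function, both of which are routine in $\mathsf{RCA}_0$, and I would quote the latter as a standard lemma rather than reprove it.
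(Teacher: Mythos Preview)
Your proposal is correct and follows essentially the same approach as the paper: the paper's proof is the one-line observation that at least one of $X_0$, $X_1$ is infinite, and the induced subgraph on an infinite piece is then isomorphic to $K_\omega$ (respectively $\overline{K}_\omega$). You have simply unpacked the two steps the paper leaves implicit---the bounded/unbounded dichotomy and the existence of the increasing enumeration as the witnessing isomorphism---which is exactly what one would do to check the argument goes through in $\mathsf{RCA}_0$.
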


\begin{proof}
Fix a partition of the vertices $\mathbb{N} = X_0 \sqcup X_1$. At least one of $X_0$ and $X_1$ is infinite, so the corresponding subgraph is 
isomorphic to $K_{\omega}$ or to $\overline{K}_{\omega}$.  
\end{proof}

This symmetry between $K_{\omega}$ and $\overline{K}_{\omega}$ with respect to strong indivisibility extends more generally. 
For a graph $G = (V,E)$, let $\overline{G}$ denote the graph 
obtained by swapping the edges and non-edges, except along the diagonal. 
Formally, $\overline{G} = (V, \overline{E})$ with $\overline{E} = \{ \langle m,n \rangle \not \in E : m \neq n \}$.

\begin{prop}[$\mathsf{RCA}_0$]
\label{prop:duality}
$G$ is strongly indivisible if and only if $\overline{G}$ is strongly indivisible. 
\end{prop}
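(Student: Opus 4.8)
The plan is to prove Proposition \ref{prop:duality} by a direct symmetry argument, using the fact that complementation is an involution on graphs that commutes with forming induced subgraphs and with isomorphism. First I would record the elementary observations, all provable in $\mathsf{RCA}_0$ since they involve only $\Delta^0_1$ manipulations of the given relations: (i) $\overline{\overline{G}} = G$; (ii) for any $X \subseteq V$, the induced subgraph of $\overline{G}$ on $X$ is exactly the complement of the induced subgraph of $G$ on $X$, i.e.\ $\overline{G} \! \upharpoonright \! X = \overline{G \! \upharpoonright \! X}$; and (iii) if $f$ is an isomorphism $G \cong G'$, then the same $f$ is an isomorphism $\overline{G} \cong \overline{G'}$, because $f$ preserves and reflects $E$ and equality, hence preserves and reflects $\overline{E}$.

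With these in hand the proposition is immediate. Suppose $G$ is strongly indivisible, and let $V = X_0 \sqcup X_1$ be any partition of the vertex set of $\overline{G}$ (which is the same set $V$). Applying strong indivisibility of $G$ to this same partition, we get $G \! \upharpoonright \! X_i \cong G$ for some $i \in \{0,1\}$. By (iii) this gives $\overline{G \! \upharpoonright \! X_i} \cong \overline{G}$, and by (ii) the left-hand side is precisely $\overline{G} \! \upharpoonright \! X_i$. Hence $\overline{G} \! \upharpoonright \! X_i \cong \overline{G}$, so $\overline{G}$ is strongly indivisible. The converse follows by applying what we just proved to $\overline{G}$ in place of $G$ and invoking (i).

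Since the whole argument is a finite chain of first-order implications about explicitly defined $\Delta^0_1$ sets and relations, with no appeal to induction beyond what is already available, it goes through in $\mathsf{RCA}_0$. I do not anticipate a genuine obstacle here; the only point requiring a modicum of care is verifying (ii)---that restricting the complemented edge relation to $X \times X$ yields the same relation as complementing the restricted edge relation---which amounts to checking that the clause $m \neq n$ behaves correctly under restriction, and this is routine. One could alternatively present the proof even more tersely by noting that $\mathsf{RCA}_0$ proves $\{G : G \text{ strongly indivisible}\}$ is closed under the complementation operation because that operation is a $\Delta^0_1$-definable automorphism of the relevant structure; but the step-by-step version above is cleaner to formalize.
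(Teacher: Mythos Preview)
Your proof is correct and follows essentially the same approach as the paper: both rest on the observation that a bijection $f:X_i\to V$ is an isomorphism of induced subgraphs in $G$ if and only if it is one in $\overline{G}$, which is exactly your point (iii) combined with (ii). The paper simply states this equivalence directly rather than factoring it through the three preliminary observations, but the mathematical content is identical.
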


\begin{proof}
Let $V = X_0 \sqcup X_1$ be a partition of the vertices, and let $H_i$ and $\overline{H}_i$ denote the corresponding subgraphs in $G$ and $\overline{G}$. 
Because a graph isomorphism preserves both edges and non-edges, a 
bijection $f: X_i \rightarrow V$ is an isomorphism from $H_i$ to $G$ if and only if it is an isomorphism from $\overline{H}_i$ to $\overline{G}$. 
\end{proof}

The standard development of the random graph can be carried out in $\mathsf{RCA}_0$. Let $\varphi_{\mathcal{R}}$ denote the usual extension axiom 
stated in second order arithmetic. 
\[
\forall \, \text{finite} \, A, B \subseteq V \, \Big( A \cap B = \emptyset \rightarrow \exists x \, \big[ (\forall a \in A) \, E(x,a) \wedge (\forall b \in B) \, \neg E(x,b) \big] \Big) 
\]
A countable graph satisfying $\varphi_{\mathcal{R}}$ is called a \textit{random graph}. Although this axiomatization is standard, it is typical when working in a 
random graph to assume the existential witness $x$ satisfies $x \not \in A \cup B$. To see why this condition is admissible, 
apply $\varphi_{\mathcal{R}}$ to $A \cup B$ and $\emptyset$ 
to get a vertex  $v$ connected to every node in $A \cup B$. Then apply $\varphi_{\mathcal{R}}$ to $A$ and $B \cup \{ v \}$ to get a node $x$ connected to everything in $A$ and 
nothing in $B \cup \{ v \}$. It follows that $x \not \in A$ because the edge relation is irreflexive and $x \not \in B$ because 
everything in $B$ is connected to $v$. 

There are a number of places below where it is important that we require the existential witness $x$ to be outside $A \cup B$. Therefore, going forward, we adopt the 
convention that the axiom $\varphi_{\mathcal{R}}$ includes the stipulation that $x \not \in A \cup B$. 

$\mathsf{RCA}_0$ suffices to show there is a random graph by, 
for example, letting $V = \mathbb{N}$ and putting a symmetric edge between $x$ and $y$ when $x < y$ and the $x$-th bit of the binary representation of $y$ is 1. 
Moreover, it is a folklore (and easily checked) result that the classical back-and-forth argument can be carried out in 
$\mathsf{RCA}_0$ to show that there is a unique random graph up to isomorphism. 

\begin{prop}[$\mathsf{RCA}_0$]
\label{prop:Runique}
If $G_0$ and $G_1$ are random graphs, then $G_0 \cong G_1$. 
\end{prop}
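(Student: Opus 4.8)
The plan is to run the classical back-and-forth construction, taking care that every step is effective enough to be formalized in $\mathsf{RCA}_0$. Fix random graphs $G_0 = (V_0, E_0)$ and $G_1 = (V_1, E_1)$. Since $\mathsf{RCA}_0$ proves (by applying $\varphi_{\mathcal{R}}$ repeatedly to $\emptyset$) that a random graph has infinitely many vertices, we may form the principal functions enumerating $V_0$ and $V_1$ in increasing order, say $V_0 = \{a_0, a_1, \dots\}$ and $V_1 = \{b_0, b_1, \dots\}$. I would then build by primitive recursion a sequence $\langle p_s \rangle_{s \in \mathbb{N}}$ of finite partial isomorphisms from $G_0$ to $G_1$, with $p_0 = \emptyset$, where to pass from $p_s$ to $p_{s+1}$: if $s = 2n$ and $a_n \notin \mathrm{dom}(p_s)$, extend $p_s$ so that $a_n$ enters the domain; if $s = 2n+1$ and $b_n \notin \mathrm{range}(p_s)$, extend $p_s$ so that $b_n$ enters the range; and otherwise set $p_{s+1} = p_s$. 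To keep the recursion deterministic I always take the \emph{least} witness with the required property, so the step function is $\Sigma^0_0$ in the parameters $E_0, E_1$ and the recursion is a legitimate one in $\mathsf{RCA}_0$.

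The content-bearing step is the extension. Suppose $p$ is a finite partial isomorphism and $a \in V_0 \setminus \mathrm{dom}(p)$. Put $A = \{p(c) : c \in \mathrm{dom}(p) \wedge E_0(a,c)\}$ and $B = \{p(c) : c \in \mathrm{dom}(p) \wedge \neg E_0(a,c)\}$; these are finite subsets of $V_1$, and they are disjoint because $p$ is injective. Apply $\varphi_{\mathcal{R}}$ in $G_1$ to $A$ and $B$ to get a vertex $x$ with $E_1(x,y)$ for every $y \in A$, with $\neg E_1(x,y)$ for every $y \in B$, and — invoking our standing convention — with $x \notin A \cup B$. Since $a \notin \mathrm{dom}(p)$, every $c \in \mathrm{dom}(p)$ satisfies $c \neq a$, hence is joined to $a$ by an edge or a non-edge, so $\mathrm{range}(p) = A \cup B$; thus $x \notin \mathrm{range}(p)$ and $p \cup \{(a,x)\}$ is again an injective, edge- and non-edge-preserving partial map. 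The range-extension step is the mirror image, applying $\varphi_{\mathcal{R}}$ in $G_0$. This is the only place the randomness of $G_0, G_1$ is used, and it is exactly why we require the witness of $\varphi_{\mathcal{R}}$ to lie outside $A \cup B$.

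Finally I would set $f = \bigcup_s p_s$. This set exists by $\Delta^0_1$-comprehension: since the $p_s$ are increasing, $(a,x) \in f$ iff $\exists s\,[(a,x) \in p_s]$, while (because the construction forces every $a \in V_0$ into some $\mathrm{dom}(p_s)$) $(a,x) \notin f$ iff $\exists s\,[a \in \mathrm{dom}(p_s) \wedge (a,x) \notin p_s]$, and these $\Sigma^0_1$ formulas are complementary. A $\Sigma^0_1$-induction shows $a_n \in \mathrm{dom}(p_s)$ for all $s \geq 2n+1$ and $b_n \in \mathrm{range}(p_s)$ for all $s \geq 2n+2$, so $f$ is a bijection $V_0 \to V_1$; another $\Sigma^0_1$-induction, using the extension step, shows each $p_s$ is a partial isomorphism, so $f$ preserves $E$ and $\neg E$. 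All of this stays within $\mathsf{I}\Sigma^0_1$, so it is available in $\mathsf{RCA}_0$. There is no serious obstacle: the argument is folklore, and the only points needing attention are checking that the recursion defining $\langle p_s \rangle$ is genuinely primitive (so the sequence exists in $\mathsf{RCA}_0$) and that $f$ has the complementary $\Sigma^0_1$ definition required for $\Delta^0_1$-comprehension.
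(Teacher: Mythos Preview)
Your proposal is correct and is precisely the ``classical back-and-forth argument'' that the paper invokes without writing out; the paper gives no proof of its own beyond calling the result folklore and easily checked. Your attention to the convention that the $\varphi_{\mathcal{R}}$-witness lies outside $A\cup B$, to the $\Sigma^0_0$ nature of the step relation, and to the $\Delta^0_1$ definition of $f=\bigcup_s p_s$ are exactly the formal points one must verify to see the argument goes through in $\mathsf{RCA}_0$.
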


We continue to use $\mathcal{R}$ to denote a random graph, which by Proposition \ref{prop:Runique}, is determined up to isomorphism in 
$\mathsf{RCA}_0$. 

\begin{prop}[$\mathsf{RCA}_0$]
Let $\mathcal{R}$ be a random graph. 
\begin{enumerate}
\item[(1)] Let $A$ and $B$ be disjoint finite sets of vertices in $\mathcal{R}$. The subgraph $G_{A,B}$ on 
$V_{A,B} = \{ x \in \mathcal{R} \setminus (A \cup B) : (\forall a \in A) E(x,a) \wedge 
(\forall b \in B) \neg E(x,b) \}$ is a random graph. 
\item[(2)] $\mathcal{R}$ is strongly indivisible.
\end{enumerate}
\end{prop}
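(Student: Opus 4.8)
The plan is to verify both parts directly from the extension axiom $\varphi_{\mathcal{R}}$, using Proposition \ref{prop:Runique} to upgrade ``is a random graph'' to ``$\cong \mathcal{R}$''.

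For (1), given disjoint finite $A', B' \subseteq V_{A,B}$, I would apply $\varphi_{\mathcal{R}}$ inside $\mathcal{R}$ to the sets $A \cup A'$ and $B \cup B'$. These are disjoint because $A' \cup B' \subseteq V_{A,B}$ is disjoint from $A \cup B$ by the definition of $V_{A,B}$, while $A \cap B = \emptyset$ and $A' \cap B' = \emptyset$ by hypothesis. The witness $x$ returned by $\varphi_{\mathcal{R}}$ lies outside $A \cup A' \cup B \cup B'$ and is joined to everything in $A \cup A'$ and to nothing in $B \cup B'$, so $x \in V_{A,B}$, $x \notin A' \cup B'$, and $x$ has the adjacencies to $A'$ and $B'$ demanded by $\varphi_{\mathcal{R}}$ for $G_{A,B}$. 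Taking $A' = B' = \emptyset$ already shows $V_{A,B} \neq \emptyset$, so $G_{A,B}$ is a graph. It is exactly here that the convention ``$x \notin A \cup B$'' built into $\varphi_{\mathcal{R}}$ (see the discussion preceding Proposition \ref{prop:Runique}) does the work, guaranteeing that the witness lands in $V_{A,B}$ and misses $A' \cup B'$.

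For (2), by Proposition \ref{prop:Runique} it suffices to show that in any partition $\mathcal{R} = X_0 \sqcup X_1$ one of the two pieces satisfies $\varphi_{\mathcal{R}}$, and I would argue this by contradiction, following Henson \cite{Henson}. Since $\varphi_{\mathcal{R}}$ relativized to $X_i$ is arithmetical, $\mathsf{RCA}_0$ proves that its failure for $X_0$ yields disjoint finite $A_0, B_0 \subseteq X_0$ with no $x \in X_0 \setminus (A_0 \cup B_0)$ joined to all of $A_0$ and to none of $B_0$, and similarly failure for $X_1$ yields $A_1, B_1 \subseteq X_1$. Setting $A = A_0 \cup A_1$ and $B = B_0 \cup B_1$, these are disjoint: the same-side intersections vanish by choice, and the cross-side intersections vanish because $X_0 \cap X_1 = \emptyset$. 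Applying $\varphi_{\mathcal{R}}$ in $\mathcal{R}$ to $A, B$ produces $x \notin A \cup B$ joined to all of $A$ and to none of $B$. If $x \in X_0$ then $x \in X_0 \setminus (A_0 \cup B_0)$ and $x$ is joined to all of $A_0$ and to none of $B_0$, contradicting the choice of $A_0, B_0$; the case $x \in X_1$ is symmetric. Hence some piece satisfies $\varphi_{\mathcal{R}}$, and is therefore a copy of $\mathcal{R}$.

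I do not anticipate a real obstacle: the argument is a direct formalization using no comprehension or induction beyond $\mathsf{RCA}_0$. The points needing a little care are the disjointness bookkeeping for the combined finite sets and, as noted, the use of the ``$x \notin A \cup B$'' convention; one should also observe that (1) is not strictly needed for (2) under this approach, though it records the useful fact that each ``neighborhood'' subgraph $G_{A,B}$ is itself a copy of $\mathcal{R}$.
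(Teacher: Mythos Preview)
Your proposal is correct and matches the paper's proof essentially line for line: for (1) you apply $\varphi_{\mathcal{R}}$ to the unions $A\cup A'$, $B\cup B'$ (the paper's $A\cup C$, $B\cup D$), and for (2) you run the same contradiction argument combining counterexample pairs $A_0\cup A_1$, $B_0\cup B_1$. Your added remarks on disjointness bookkeeping, nonemptiness of $V_{A,B}$, and the role of the $x\notin A\cup B$ convention are accurate elaborations of steps the paper leaves implicit.
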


\begin{proof}
For (1), to show $G_{A,B}$ is a random graph, consider disjoint finite sets $C,D \subseteq V_{A,B}$. Since $\mathcal{R}$ is random, there is 
a node $x \not \in A \cup B \cup C \cup D$ such that $E(x,y)$ for all $y \in A \cup C$ and $\neg E(x,z)$ for all $z \in B \cup D$. It follows that $x \in V_{A,B}$ and that $x$ witnesses the 
extension axiom for the finite sets $C$ and $D$ in $G_{A,B}$.

For (2), fix a partition $\mathcal{R} = X_0 \sqcup X_1$. Suppose for a contradiction that neither of the induced subgraphs are random. For $i < 2$, fix disjoint 
finite sets $A_i, B_i \subseteq X_i$ for which the extension axiom fails in the induced subgraph. Since $\mathcal{R}$ is random, there is an $x \not \in A_0 \cup A_1 \cup B_0 
\cup B_1$ such that $E(x,a)$ for all $a \in A_0 \cup A_1$ and $\neg E(x,b)$ for all $b \in B_0 \cup B_1$. Let $j < 2$ be such that $x \in X_j$ and notice that $x$ witnesses the 
extension property for the pair $A_j, B_j$ in $X_j$, contrary to assumption.  
\end{proof}

We turn to showing $K_{\omega}$, $\overline{K}_{\omega}$ and $\mathcal{R}$ are the only strongly indivisible countable graphs. Since every nontrivial 
partition of a finite graph witnesses that it is not strongly indivisible, we only need to consider infinite graphs. 

\begin{thm}[Cameron \cite{Cam:97}]
\label{thm:Cameron}
If $G$ is a countable strongly indivisible graph, then $G$ is isomorphic to $K_{\omega}$, $\overline{K}_{\omega}$ or $\mathcal{R}$.
\end{thm}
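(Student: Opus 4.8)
The plan is to fix a strongly indivisible $G$ and split into cases according to whether $G \cong \mathcal{R}$. If $G \cong \mathcal{R}$ we are done, so assume the extension axiom $\varphi_{\mathcal{R}}$ fails in $G$; the goal is then to show $G \cong K_\omega$ or $G \cong \overline{K}_\omega$. Failure of $\varphi_{\mathcal{R}}$ produces disjoint finite $A, B \subseteq V$ with no witness outside $A \cup B$, and here I would invoke $\mathsf{L}\Sigma^0_2$ to choose such a pair with $n := |A| + |B|$ minimal. Being a counterexample is $\Pi^0_1$ in the canonical code of $(A,B)$, so the existence of a counterexample of prescribed size is $\Sigma^0_2$, and picking the least size is exactly where induction beyond $\mathsf{I}\Sigma^0_1$ is needed. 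Note $n \geq 1$, since $V \neq \emptyset$.

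Before the main argument I would record two lemmas, both provable in $\mathsf{ACA}_0$: a strongly indivisible graph with an isolated vertex is isomorphic to $\overline{K}_\omega$, and a strongly indivisible graph with a universal vertex is isomorphic to $K_\omega$. For the first, form the set $I$ of isolated vertices — available since isolation is $\Pi^0_1$ — together with its complement $J$. Symmetry of the edge relation forces every vertex of $J$ to have a neighbor in $J$, so $J$ has no isolated vertex and hence $J \not\cong G$; thus $I \cong G$, and since $I$ induces an edgeless graph on an infinite set, $I \cong \overline{K}_\omega$. The universal case follows by applying the first lemma to $\overline{G}$, which is strongly indivisible by Proposition \ref{prop:duality} and in which the universal vertex of $G$ has become isolated.

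The crux is a single partition read off from the minimal counterexample. Fix an enumeration of $A \cup B$. Each $v \in V \setminus (A \cup B)$ is a non-witness, hence is \emph{defective} on at least one element of $A \cup B$ (a missing edge to a point of $A$, or a present edge to a point of $B$); let $c(v)$ be the least such element. This partitions $V \setminus (A \cup B)$, which is infinite, into $n$ classes, so some class $C_c$ is infinite. Treat the case $c \in A$ (the case $c \in B$ is symmetric, with the roles of isolated and universal vertices interchanged). Set $X_0 = \{c\} \cup C_c$ and $X_1 = V \setminus X_0$. Every vertex of $C_c$ is a non-neighbor of $c$, so $c$ is isolated in $X_0$; hence if $X_0 \cong G$, then $G$ has an isolated vertex and the first lemma gives $G \cong \overline{K}_\omega$. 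Otherwise $X_1 \cong G$ by strong indivisibility, and I claim $(A \setminus \{c\}, B)$ is a counterexample to $\varphi_{\mathcal{R}}$ inside $X_1$: it lies in $X_1$ because $X_0$ meets $A \cup B$ only in $c$, and any would-be witness $x$ for it inside $X_1$ lies outside both $A \cup B$ and $C_c$, has no defect except possibly on $c$, and — being a non-witness for $(A,B)$ in $G$ — must therefore be defective on $c$ and on no other element, so $c(x) = c$ and $x \in C_c$, a contradiction. Transporting this counterexample across $X_1 \cong G$ gives one of size $n - 1$ in $G$, contradicting the minimality of $n$. So $X_0 \cong G$ and $G \cong \overline{K}_\omega$; in the case $c \in B$ the same argument yields $G \cong K_\omega$.

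The one genuinely delicate point, and the step I expect to be the obstacle, is the choice of partition. Splitting $V$ merely according to adjacency to a single vertex of $A \cup B$ does not work: restricting to one side can destroy the constraints coming from the remaining points of $A \cup B$, so that after deleting a vertex from the counterexample one no longer has a counterexample on that side. Organizing the partition by the \emph{least} defect is precisely what guarantees that any witness reappearing in $X_1$ would already have been a witness in $G$, which is what lets the minimality of $n$ be invoked. Everything else — transporting counterexamples along isomorphisms, the identities $\overline{\overline{G}} = G$ and (via Proposition \ref{prop:Runique}) $\overline{\mathcal{R}} \cong \mathcal{R}$, and the fact that a finite union of finite sets is finite — is routine in $\mathsf{RCA}_0$, so the whole proof goes through in $\mathsf{ACA}_0$ with the single appeal to $\mathsf{L}\Sigma^0_2$.
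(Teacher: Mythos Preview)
Your argument is correct, but the route differs from the paper's. The paper proves the contrapositive: assuming $G$ is not isomorphic to any of the three graphs, it exhibits a partition with neither side isomorphic to $G$. It first disposes of the cases where $G$ has an isolated or a universal vertex (your two lemmas, essentially, but stated as Cases 1 and 2 producing bad partitions directly), and then, in the remaining case, shows the minimal $n$ satisfies $n \geq 2$. This lets the paper split $A \cup B$ into two \emph{nonempty} pieces $U_0, U_1$ and take $X_i$ to consist of $U_i$ together with the vertices not correctly joined to $U_i$; each $X_i$ then fails the extension property for a pair of size strictly less than $n$, so neither can be isomorphic to $G$. Your proof instead works directly from strong indivisibility, needs only $n \geq 1$, and peels off a single element $c \in A \cup B$: the ``least defect'' partition makes $c$ isolated (or universal) in $X_0$ and leaves an $(n-1)$-counterexample in $X_1$, so strong indivisibility forces either the lemma conclusion or a contradiction with minimality. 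The paper's decomposition is slightly more convenient for the later effective analysis, since in the no-isolated/no-universal case it produces a single computable partition that fails \emph{classically} on both sides; your argument would need to be reorganized contrapositively to serve that purpose, though the ``least defect'' map is equally computable. Both proofs use arithmetic comprehension for the isolated/universal sets and $\mathsf{L}\Sigma^0_2$ for the minimal $n$, so both sit in $\mathsf{ACA}_0$ as you note. (Minor point: you do not actually need $C_c$ to be infinite --- nonempty suffices, and in fact the argument goes through even for empty $C_c$, since a would-be witness in $X_1$ still yields $c(x)=c$ and hence a contradiction.)
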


\begin{proof}
Assume $G$ is an infinite graph that is not isomorphic to $K_{\omega}$, $\overline{K}_{\omega}$ or $\mathcal{R}$. 

\textit{Case 1.} Suppose $G$ has isolated vertices. Let $X_0 = \{ x \in G : x \text{ is isolated} \}$ and $X_1 = G \setminus X_0$. The induced 
subgraph on $X_0$ is $\overline{K}_{|X_0|}$, which is not isomorphic to $G$ by assumption. If $x$ were an isolated vertex in the induced subgraph $X_1$, then, 
in fact, $x$ would be an isolated vertex in $G$, and hence $x \in X_0$. Therefore, $X_1$ has no isolated vertices and so is not isomorphic to $G$.

\textit{Case 2.} Suppose $G$ has universal vertices. This case follows by letting $X_0$ be the set of universal vertices and reasoning as in Case 1. 

\textit{Case 3.} Suppose $G$ has neither isolated nor universal vertices. Since $G$ is not a random graph, let $n$ be least such that  
there are disjoint sets $A$ and $B$ for which $|A|+|B| = n$ and the extension axiom $\varphi_{\mathcal{R}}$ 
fails for $A$ and $B$.

We claim that $n \geq 2$. We cannot have $n=0$ because $G$ is nonempty, so suppose $n=1$. 
If $A = \emptyset$ and $B = \{ b \}$, then the failure of the extension axiom for $A, B$ means $b$ is a universal vertex, contrary to 
our case assumption. On the other hand, if $A = \{ a \}$ and $B = \emptyset$, then the failure of $\varphi_{\mathcal{R}}$ implies $a$ is an isolated vertex, 
also contrary to our case assumption.

Since $n \geq 2$, we can partition $A \cup B = U_0 \sqcup U_1$ into nonempty sets $U_0$ and $U_1$. We say a vertex $x$ is 
\textit{not correctly joined 
to} $U_i$ if there is an $a \in U_i \cap A$ such that $\neg E(x,a)$ or there is a $b \in B \cap U_i$ such that $E(x,b)$. Since the extension axiom does not hold 
for $A$ and $B$, every vertex $x \not \in A \cup B$ is not correctly joined to at least one of $U_0$ or $U_1$. 

Let $X_0 = U_0 \cup \{ x \in G : x \not \in U_1 \wedge x \text{ is not correctly joined to } U_0 \}$ and let $X_1 = G \setminus X_0$. Note that 
$G = X_0 \sqcup X_1$, $U_1 \subseteq X_1$, and every node in $X_1 \setminus U_1$ is not correctly joined to $U_1$. By construction,  
$X_0$ fails to satisfy the extension axiom with $A \cap U_0$ and $B \cap U_0$, while $X_1$ fails to satisfy the extension axiom 
with $A \cap U_1$ and $B \cap U_1$. Since $U_0$ and $U_1$ are nonempty, it follows that $X_0$ and $X_1$ fail to satisfy instances of the 
extension axiom for which the sum of the sizes of the witnessing sets is strictly less than $n$. Therefore, since $n$ was chosen least for $G$, 
neither $X_0$ nor $X_1$ is isomorphic to $G$.
\end{proof}

On its face, this proof uses axioms outside of $\mathsf{RCA}_0$ in three places. In Cases 1 and 2, it uses 
arithmetic comprehension to form the sets of isolated and universal vertices. In Case 3, it uses the existence of the least $n$ 
such that 
\[
\exists \, \text{finite disjoint} \, A, B \, \Big(  |A|+|B|=n \, \wedge \, \forall x \, \big[ (\exists a \in A) \, \neg E(x,a) \vee (\exists b \in B) \, 
E(x,b) \big] \Big) 
\]
Since the least number axiom schema $\mathsf{L}\Sigma^0_2$ is equivalent to $\mathsf{I}\Sigma^0_2$, which holds in $\mathsf{ACA}_0$,  
Theorem \ref{thm:Cameron} is provable in $\mathsf{ACA}_0$. 

The existence of the set of isolated or universal vertices is equivalent to $\mathsf{ACA}_0$. Similarly, $\mathsf{L}\Sigma^0_2$ is 
equivalent to the least number principle restricted to formulas of the form above. 
We give the equivalence with $\mathsf{ACA}_0$ here because it is short, but delay the $\mathsf{L}\Sigma^0_2$ 
equivalence until Section \ref{sec:induction}. These equivalences do not give us lower bounds on the strength of Theorem \ref{thm:Cameron}, but they do tell us that the 
proof above cannot be carried out in a system weaker than $\mathsf{ACA}_0$.

\begin{prop}[$\mathsf{RCA}_0$]
The following are equivalent. 
\begin{enumerate}
\item[(1)] $\mathsf{ACA}_0$.
\item[(2)] For every graph, the set of universal nodes exists. 
\item[(3)] For every graph, the set of isolated nodes exists. 
\end{enumerate}
\end{prop}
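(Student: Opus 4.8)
The plan is to prove $(1)\Rightarrow(2)$ and $(1)\Rightarrow(3)$ directly by arithmetic comprehension, to observe that $(2)$ and $(3)$ are interderivable over $\mathsf{RCA}_0$ by passing to the complementary graph, and then to close the loop by proving $(2)\Rightarrow(1)$ through a coding of the range of an injection into the universal vertices of a suitable graph.

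For the forward implications, I would work in $\mathsf{ACA}_0$: given a graph $G=(V,E)$, the set of universal nodes $\{x\in V:(\forall y\in V)(y\neq x\to E(x,y))\}$ and the set of isolated nodes $\{x\in V:(\forall y\in V)\,\neg E(x,y)\}$ are each defined by a $\Pi^0_1$ formula with parameters $V$ and $E$, hence exist by arithmetic comprehension. For $(2)\Leftrightarrow(3)$, note that in $\mathsf{RCA}_0$ one can form the complementary graph $\overline{G}=(V,\overline E)$ of any graph $G$, since $\overline E$ is $\Delta^0_1$-definable from $E$; and because $E$ is irreflexive, a vertex is isolated in $G$ exactly when it is universal in $\overline G$. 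Since $\overline{\overline G}=G$, the principles $(2)$ and $(3)$ are interderivable over $\mathsf{RCA}_0$, so it suffices to establish $(2)\Rightarrow(1)$.

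For the reversal, I would use the standard fact that, over $\mathsf{RCA}_0$, $\mathsf{ACA}_0$ is equivalent to the assertion that the range of every injection $f\colon\mathbb{N}\to\mathbb{N}$ exists (see \cite{sim:book}). Given such an $f$, I would build a graph $G=(\mathbb{N},E)$ whose edge relation is the complete graph on $\mathbb{N}$ with the edge between $2n$ and $2s+1$ deleted whenever $f(s)=n$; that is, for distinct $u$ and $v$, $E(u,v)$ holds unless $\{u,v\}=\{2n,2s+1\}$ with $f(s)=n$. This relation is $\Delta^0_1$ in $f$ because $f$ is total, and it is irreflexive and symmetric, so $G$ exists in $\mathsf{RCA}_0$; applying $(2)$ produces the set $U$ of universal nodes of $G$. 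An even vertex $2n$ fails to be universal exactly when some edge at it was deleted, i.e.\ exactly when $n$ lies in the range of $f$, while every odd vertex $2s+1$ is non-universal since its edge to $2f(s)$ was deleted. Hence $U=\{2n:n\notin\operatorname{ran}(f)\}$, and the range of $f$, namely $\{n:2n\notin U\}$, exists by $\Delta^0_1$-comprehension, giving $\mathsf{ACA}_0$.

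I do not expect a genuine obstacle here; this is a routine $\mathsf{ACA}_0$-equivalence. The only points that need care are verifying that the edge relation of the coding graph is genuinely $\Delta^0_1$ (so that $G$ is a legitimate object of $\mathsf{RCA}_0$) and correctly identifying which vertices turn out universal. If one prefers to avoid the appeal to complementation, one can instead code $f$ directly into the isolated case by starting from the edgeless graph on $\mathbb{N}$ and adding an edge between $2n$ and $2s+1$ whenever $f(s)=n$, so that $2n$ is isolated if and only if $n\notin\operatorname{ran}(f)$.
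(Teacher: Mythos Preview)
Your proposal is correct and follows essentially the same approach as the paper: arithmetic comprehension for the forward implications, passage to the complementary graph for the equivalence of (2) and (3), and coding the range of an injection into the universal/isolated vertices of a bipartite-style graph for the reversal. The only cosmetic difference is that the paper closes the cycle via $(3)\Rightarrow(1)$ (using the edgeless graph plus edges $\{2n,2m+1\}$ when $f(n)=m$, so that odd vertices detect the range) whereas you do the dual $(2)\Rightarrow(1)$; your final remark recovering the isolated-node version is essentially the paper's construction with the roles of even and odd swapped.
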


\begin{proof}
The implications from (1) to (2) and from (1) to (3) hold because these sets are arithmetically definable. 

For the implication from (2) to (3), fix a graph $G$. The set of isolated nodes in $G$ is the same as the set of universal nodes in $\overline{G}$, which exists by (2). 

For the implication from (3) to (1), let $f: \mathbb{N} \rightarrow \mathbb{N}$ be an arbitrary one-to-one function. It suffices to show the range of $f$ exists. 
Define $G$ with $V = \mathbb{N}$ and a symmetric edge between $x$ and $y$ if and only if $x=2n$, $y=2m+1$ and $f(n)=m$. The range of $f$ 
is definable from the set of isolated nodes in $G$ because an odd vertex $2m+1$ is isolated if and only if $m$ is not in the range of $f$. 
\end{proof}

\section{Effectiveness up to presentation}
\label{sec:presentation}

Our motivating question is whether the classification of strongly indivisible graphs is provable in $\mathsf{RCA}_0$, and in particular, 
whether it holds in the $\omega$-model $\mathsf{REC}$. In the previous section, we showed one direction holds in $\mathsf{RCA}_0$, which translates to 
$\mathsf{REC}$ as follows. 

\begin{thm}
\label{thm:easydirection}
Let $G$ be a computable graph that is isomorphic to $K_{\omega}$, $\overline{K}_{\omega}$ or $\mathcal{R}$. For every computable partition $G = X_0 \sqcup X_1$, 
either $X_0$ or $X_1$ is computably isomorphic to $G$. 
\end{thm}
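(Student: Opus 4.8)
The plan is to split into three cases according to whether $G$ is isomorphic to $K_\omega$, $\overline{K}_\omega$, or $\mathcal{R}$, and in each case to produce, from the given computable partition, an explicit computable isomorphism from one of the two induced subgraphs onto $G$. Throughout, ``computable partition'' means that $X_0$ and $X_1$ are computable sets; since the edge relation $E$ of $G$ is computable, the induced subgraphs on $X_0$ and on $X_1$ are again computable graphs. In each case the relevant induced subgraph will be infinite, so I can compare it with $G$ by matching up increasing enumerations of the two (infinite, computable) vertex sets.

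Suppose first that $G \cong K_\omega$ or $G \cong \overline{K}_\omega$. Then $G$ has infinitely many vertices, so at least one of $X_0$, $X_1$, say $X_i$, is infinite; being a computable subset of $\mathbb{N}$, it has a computable principal function $h$, and the domain $V$ of $G$ has a computable principal function $g$. The map $h(k) \mapsto g(k)$ is a computable bijection from $X_i$ onto $V$. In the $K_\omega$ case every pair of distinct vertices is an edge both in $G$ and in the induced subgraph on $X_i$; in the $\overline{K}_\omega$ case neither graph has any edges. Either way this bijection preserves and reflects $E$, so it is a computable isomorphism, and $X_i$ is computably isomorphic to $G$.

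The substantive case is $G \cong \mathcal{R}$, so $G$ is a computable graph satisfying $\varphi_{\mathcal{R}}$. Exactly as in the proof that a random graph is strongly indivisible, at least one of $X_0$, $X_1$, say $X_i$, satisfies $\varphi_{\mathcal{R}}$: otherwise one fixes for each $j < 2$ disjoint finite $A_j, B_j \subseteq X_j$ witnessing a failure of the extension axiom in $X_j$, applies $\varphi_{\mathcal{R}}$ in $G$ to $A_0 \cup A_1$ and $B_0 \cup B_1$ to obtain a vertex $x \notin A_0 \cup A_1 \cup B_0 \cup B_1$, and observes that whichever $X_j$ contains $x$, that $x$ witnesses the extension axiom for $A_j, B_j$ in $X_j$ --- a contradiction. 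The induced subgraph on $X_i$ is then a computable graph satisfying $\varphi_{\mathcal{R}}$, i.e.\ a computable random graph, and so is $G$. Now I invoke Proposition \ref{prop:Runique}: since it is provable in $\mathsf{RCA}_0$, it holds in $\mathsf{REC}$, so applied to these two computable random graphs it yields a computable isomorphism between them. Hence $X_i$ is computably isomorphic to $G$, completing the proof.

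I do not expect a genuine obstacle: all three cases reduce to ingredients already in hand, and the theorem is essentially the translation to $\mathsf{REC}$ of the $\mathsf{RCA}_0$ results of Section~\ref{sec:classical}. The only place that warrants any care is the appeal to Proposition \ref{prop:Runique}, whose underlying back-and-forth argument is effective precisely because a witness for each extension step is furnished by $\varphi_{\mathcal{R}}$ --- and, thanks to the convention that this witness lies outside $A \cup B$, the finite partial map stays injective --- so that the alternating searches always terminate and the resulting computable map is total and onto. The interest of the statement is the contrast with the converse direction, which occupies the remainder of the paper.
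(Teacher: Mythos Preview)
Your proposal is correct and follows exactly the approach the paper intends: the paper does not give a standalone proof of this theorem but simply states it as the translation to $\mathsf{REC}$ of the $\mathsf{RCA}_0$ results of Section~\ref{sec:classical}, and you have unpacked that translation case by case, invoking precisely the same ingredients (the pigeonhole argument for $K_\omega$ and $\overline{K}_\omega$, the union-of-counterexamples argument and Proposition~\ref{prop:Runique} for $\mathcal{R}$).
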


Because the graphs $K_{\omega}$, $\overline{K}_{\omega}$ and $\mathcal{R}$ are computably categorical, it does not matter whether we use ``isomorphic'' or 
``computably isomorphic'' in the statement of Theorem \ref{thm:easydirection}. However, in general, it is possible for the effectiveness properties to vary across 
computable presentations of a graph. We show that Theorem \ref{thm:Cameron} is effective up to computable presentation in a strong 
form in which we consider the classical isomorphism types of the partition pieces.  

\begin{thm}
\label{thm:presentation}
Let $G$ be a computable graph that is not isomorphic to $K_{\omega}$, $\overline{K}_{\omega}$ or $\mathcal{R}$. There is a computable presentation $H$ of $G$ 
and a computable partition $H = X_0 \sqcup X_1$ such that neither $X_0$ nor $X_1$ is classically isomorphic to $G$. 
\end{thm}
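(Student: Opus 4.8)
The plan is to run Cameron's three-case argument (the proof of Theorem~\ref{thm:Cameron}) and, in each case, either exhibit a computable partition of $G$ itself or first pass to a suitably chosen computable copy $H$. We may assume $G$ is infinite. Crucially, we are free to use finitely much non-uniform information about the fixed graph $G$ --- such as how many isolated vertices it has, or a fixed minimal counterexample to the extension axiom --- since we need only produce \emph{some} good presentation and partition, not do so uniformly.

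For Cases 1 and 2, by the duality of Proposition~\ref{prop:duality} it suffices to treat Case 1, in which $G$ has an isolated vertex; Case 2 then follows by applying the construction to the complement $\overline{G}$ and taking complements throughout. Let $N$ be the set of non-isolated vertices of $G$, a $\Sigma^0_1$ (hence c.e.) subset of $V(G)$. Because an isolated vertex has no edges at all, we have $G = \overline{K}_k \sqcup G[N]$ as graphs, where $k \in \{1,2,\dots\} \cup \{\omega\}$ is the (known) number of isolated vertices; and since every neighbor of a non-isolated vertex is non-isolated, $G[N]$ has no isolated vertex. Now $G[N]$ is a graph on a c.e. domain with a computable edge relation, so it is isomorphic to a computable graph $\widetilde{G}$: enumerate $N$ without repetition as $n_0, n_1, \dots$ and put $E_{\widetilde{G}}(i,j)$ if and only if $E_G(n_i, n_j)$. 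Let $H$ be a computable copy of the disjoint union $\overline{K}_k \sqcup \widetilde{G} \cong G$, and let $X_0$ and $X_1$ be the vertex sets of the two summands; this is a computable partition. Then $H[X_0] \cong \overline{K}_k$ is either finite or $\overline{K}_\omega$, hence not isomorphic to $G$, and $H[X_1] \cong G[N]$ has no isolated vertex while $G$ does, hence it too is not isomorphic to $G$.

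For Case 3 --- where $G$ has no isolated or universal vertices and $G \not\cong \mathcal{R}$ --- we claim $H = G$ already works. Since $G$ is not random it fails $\varphi_{\mathcal{R}}$, so fix a minimal counterexample: disjoint finite $A$ and $B$ with $|A| + |B| = n$ least and no vertex outside $A \cup B$ correctly joined to the pair, where $n \geq 2$ by the argument in the proof of Theorem~\ref{thm:Cameron}. Fix a partition $A \cup B = U_0 \sqcup U_1$ into nonempty sets and define $X_0$ and $X_1$ verbatim as in that proof. The point is that $X_0$ is \emph{computable}: whether $x \in X_0$ is decided by the finite tests ``$x \in U_0$?'', ``$x \in U_1$?'', and ``is $x$ incorrectly joined to $U_0$?'', the last of which only inspects the finitely many edges between $x$ and the finite set $U_0 \cap (A \cup B)$. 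As Cameron's argument shows, $X_0$ and $X_1$ fail instances of $\varphi_{\mathcal{R}}$ whose witnessing sets have total sizes $|U_0| < n$ and $|U_1| < n$; pulling such an instance back along a hypothetical isomorphism with $G$ would contradict the minimality of $n$, so neither $X_0$ nor $X_1$ is isomorphic to $G$.

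The conceptual content I expect is the observation behind Cases 1 and 2: although the set of isolated (dually, universal) vertices of $G$ is only co-c.e. and may be noncomputable in $G$, the \emph{complementary} part $G[N]$ is a c.e. graph and hence has a computable presentation, which lets us relocate the special vertices onto a decidable set. Case 3 is where one might expect to be forced to change presentations but is not: locating the minimal counterexample to $\varphi_{\mathcal{R}}$ is genuinely non-effective, yet the partition it induces is decidable because ``incorrectly joined to $U_0$'' is a finite test, and minimality of $n$ enters only in the classical verification, never in the computation of the partition.
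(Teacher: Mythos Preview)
Your argument is correct. Case~3 matches the paper's treatment exactly: the paper likewise observes that the minimal counterexample exists classically and that the partition it induces is computable because ``not correctly joined to $U_0$'' is a bounded test.

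For Cases~1 and~2, however, you take a genuinely different and more elementary route. The paper invokes its Theorem~\ref{thm:isolated} (every computable graph has a computable copy in which the set of isolated vertices is computable), whose proof is a moderately involved stage-by-stage construction of a $\Delta^0_2$ isomorphism. You instead exploit the structural fact that $G$ decomposes as $\overline{K}_k \sqcup G[N]$, where $N$ is the c.e.\ set of non-isolated vertices; pulling $G[N]$ back along an enumeration of $N$ yields a computable $\widetilde{G}$, and the disjoint union $\overline{K}_k \sqcup \widetilde{G}$ is then a computable copy of $G$ with a built-in computable partition. This bypasses Theorem~\ref{thm:isolated} entirely---indeed, your construction \emph{is} a short alternative proof of that theorem, since the isolated vertices of $H$ are exactly the $\overline{K}_k$ summand. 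What the paper's approach buys is a self-contained statement (Theorem~\ref{thm:isolated}) of possibly independent interest and an explicit stagewise description of the $\Delta^0_2$ isomorphism; what your approach buys is brevity and transparency, at the cost of a little non-uniform bookkeeping about $k$ and $|N|$, which you correctly flag as harmless here.
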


To prove Theorem \ref{thm:presentation}, we use the following theorem and corollary to mimic the classical proof of Theorem \ref{thm:Cameron}. 

\begin{thm}
\label{thm:isolated}
Every computable graph $G$ has a computable copy in which the set of isolated vertices is computable. 
\end{thm}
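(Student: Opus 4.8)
The plan is to build the desired computable copy $H$ of $G$ by interleaving the original presentation with a carefully controlled re-enumeration that ``flushes out'' isolated vertices as soon as we can be sure they are isolated. The key observation is that being isolated is a $\Pi^0_1$ property: a vertex $x$ is isolated in $G$ if and only if $\neg E(x,y)$ for all $y$, which we can never confirm in finite time, but being non-isolated is $\Sigma^0_1$: once we see a single edge incident to $x$, we know $x$ is not isolated. So the idea is to assign the vertices of $H$ in stages, maintaining a ``pending'' list of vertices of $G$ that have not yet been shown non-isolated, and only placing a vertex of $G$ into $H$ once it has either been confirmed non-isolated (by finding an edge) or has waited long enough that we commit to calling it isolated in $H$.

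First I would set up the construction as a movable-markers argument. At each stage $s$ we have enumerated finitely much of $G$ and have built a finite initial segment of $H$ together with a partial isomorphism $f_s$ from a finite subset of $G$ onto that initial segment. We keep a queue of ``deferred'' vertices of $G$ — those $x$ for which no incident edge has yet appeared. When a new edge $E(x,y)$ is discovered in $G$, both $x$ and $y$ are promoted out of the deferred queue and given $H$-images immediately (as are the least-indexed not-yet-placed non-deferred vertices, to ensure we're building a bijection in the limit). Vertices that remain deferred forever are exactly the isolated vertices of $G$; to handle them we allocate, interleaved into $H$, a computable ``reservoir'' of vertices which we will only ever connect to the images of deferred-vertices-that-later-get-promoted. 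The crucial bookkeeping is: a vertex of $H$ is isolated iff its $f^{-1}$-preimage is an isolated vertex of $G$ that never got promoted, and we must be able to recognize the isolated vertices of $H$ computably.

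Here is the cleaner way to arrange that recognition. Enumerate the edges of $G$ in some computable order; at stage $s$, let $D_s$ be the set of vertices of $G$ of index $< s$ that have no incident edge among the first $s$ edges enumerated. Build $H$ so that vertex $n$ of $H$ receives its $f^{-1}$-image only at the first stage $s$ at which either (i) $f^{-1}(n)$ is about to be a non-deferred vertex, in which case we commit and never revisit, or (ii) we have placed $n$ provisionally as the image of the current least deferred vertex $x$, and we keep $n$ flagged ``provisionally isolated''; if at a later stage an edge incident to $x$ appears, we add the corresponding edges in $H$ and clear the flag. The set of isolated vertices of $H$ is then $\{\,n : n \text{ is placed and its flag is never cleared}\,\}$. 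This is still only $\Pi^0_1$ as stated, so the real trick — and what makes the theorem true — is to ensure each deferred vertex that ever gets promoted does so \emph{before} we place any further vertices on top of it, i.e.\ to keep at most one provisionally-isolated $H$-vertex ``live'' at a time, advancing to the next deferred vertex only once the current one is resolved or permanently abandoned. If we manage that, then ``$n$ is isolated in $H$'' becomes decidable by simulating the construction until $n$'s status is settled, which it always is by a bounded stage.

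The main obstacle I anticipate is exactly this scheduling: $G$ may have infinitely many isolated vertices and also infinitely many non-isolated vertices whose edges appear only very late, and we must weave both into a single computable presentation $H$ while keeping the isolated set of $H$ computable and keeping $f$ a genuine isomorphism in the limit (surjective onto $G$, edge-preserving). Getting the surjectivity right — making sure every vertex of $G$, deferred or not, eventually gets an $H$-image, including the isolated ones — while simultaneously not prematurely committing $H$-vertices to being isolated, requires a priority-style interleaving: reserve even-indexed $H$-vertices for ``the $k$-th vertex of $G$ in enumeration order, once it's safe'' and odd-indexed $H$-vertices for the provisional-isolated slot, and argue by induction on stages that each requirement (place $v_k$; resolve the current provisional slot) acts finitely often. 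I expect the verification that $f$ is total, injective, surjective, and edge-preserving, and that the isolated set of $H$ is computable, to be routine once the scheduling discipline is pinned down; the delicate part is stating that discipline so that no $H$-vertex's isolation status ever depends on infinitely much of $G$.
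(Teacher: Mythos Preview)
Your diagnosis is correct up to the point where you identify that the flag-never-cleared criterion is only $\Pi^0_1$, but your proposed fix does not work. Keeping at most one provisionally-isolated $H$-vertex live at a time and advancing only once it is ``resolved or permanently abandoned'' stalls: if the current deferred vertex $x$ is genuinely isolated in $G$, no edge ever appears, so $x$ is never resolved and you never advance---hence no further isolated vertex of $G$ receives an $H$-image and $f$ fails to be surjective. ``Permanently abandoned'' is not an option either, since an abandoned vertex then gets no image at all. The priority-style interleaving sketched in your final paragraph does not escape this; the core difficulty is that you must commit \emph{infinitely many} $H$-vertices to being isolated without ever being able to confirm any single one, so no one-at-a-time discipline can succeed.

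The paper's device is to let the domain of $H_s$ be non-monotonic. At every stage $s$, the vertices currently isolated in $G_s$ are mapped onto an initial segment $\{0,2,\ldots,2j\}$ of the even numbers, and the non-isolated ones onto an initial segment of the odds. When a previously-isolated vertex $x$ acquires an edge at stage $s+1$, $x$ is moved to a fresh odd number and the remaining isolated vertices slide down to a shorter initial segment of the evens; thus some even number may drop out of $H_{s+1}$, returning permanently only once $G$ has accumulated enough genuinely isolated vertices below it. The edge relation on $H$ remains stable across stages because even vertices never carry edges at any stage and odd preimages never change once assigned. In the limit the isolated set of $H$ is exactly the even numbers, trivially decidable. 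Your even/odd reservation idea is pointing in the right direction, but the missing ingredient is allowing $H$-vertices to temporarily disappear while their $G$-preimages are re-sorted, rather than trying to settle each one before moving on.
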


\begin{cor}
\label{cor:universal}
Every computable graph $G$ has a computable copy in which the set of universal vertices is computable. 
\end{cor}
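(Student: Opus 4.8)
The plan is to obtain Corollary \ref{cor:universal} immediately from Theorem \ref{thm:isolated} by passing through the complement operation $G \mapsto \overline{G}$, using that complementation interchanges isolated and universal vertices (cf.\ the proof of Proposition \ref{prop:duality}). Concretely, given a computable graph $G$, first note that $\overline{G} = (V, \overline{E})$ is again a computable graph: it has the same computable vertex set $V$, and its edge relation $\overline{E} = \{\langle m,n\rangle \notin E : m \neq n\}$ is computable uniformly from $E$. Moreover complementation is an involution off the diagonal, so $\overline{\overline{G}} = G$, and any graph isomorphism, preserving both edges and non-edges, is simultaneously an isomorphism of the complements.

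Now apply Theorem \ref{thm:isolated} to the computable graph $\overline{G}$ to get a computable presentation $H_0 \cong \overline{G}$ in which the set $I$ of isolated vertices of $H_0$ is computable, and set $H = \overline{H_0}$. Then $H$ is computable, being the complement of a computable graph, and the isomorphism $H_0 \cong \overline{G}$ is at the same time an isomorphism $\overline{H_0} \cong \overline{\overline{G}} = G$, so $H$ is a computable copy of $G$. Finally, unwinding the definitions shows the set of universal vertices of $H = \overline{H_0}$ is exactly $I$: a vertex $x$ is universal in $\overline{H_0}$ iff $\overline{E}_{H_0}(x,y)$ for all $y \neq x$, iff $\neg E_{H_0}(x,y)$ for all $y \neq x$, iff $x$ is isolated in $H_0$. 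Hence the universal vertices of $H$ form the computable set $I$, as required.

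I do not anticipate a real obstacle; this is a short reduction, and the only things needing care are verifying that complementation is effective and an involution, that it transports isomorphisms (so that $H$ is genuinely a copy of $G$ rather than merely of $\overline{G}$), and the routine equivalence of universality in $\overline{H_0}$ with isolation in $H_0$. The degenerate case of a finite $G$ is trivial and subsumed, since a finite graph has a computable set of universal vertices outright.
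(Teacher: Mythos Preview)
Your argument is correct and is precisely the approach the paper takes: the corollary is deduced from Theorem \ref{thm:isolated} by passing to $\overline{G}$, using that complementation is effective and swaps isolated with universal vertices. The paper states this in one line; your version simply spells out the routine checks.
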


Corollary \ref{cor:universal} follows immediately from Theorem \ref{thm:isolated} by shifting from $G$ to $\overline{G}$. 
At the end of the section, we return to a proof of Theorem \ref{thm:isolated}. For now, we use these results to prove 
Theorem \ref{thm:presentation}. 

\begin{proof}
We follow the classical proof of Theorem \ref{thm:Cameron} given above. In Case 1, when $G$ has isolated vertices, we apply Theorem \ref{thm:isolated} 
to get a computable copy $H$ for which the partition $H = X_0 \sqcup X_1$ is computable, where $X_0$ is the set of isolated vertices. In Case 2, when $G$ 
has universal vertices, we apply Corollary \ref{cor:universal} to get a computable copy $H$ for which the partition $H = X_0 \sqcup X_1$ is computable, where 
$X_0$ is the set of universal vertices. In either case, the proof that neither $X_0$ nor $X_1$ is classically isomorphic to $G$ is the same as in 
Theorem \ref{thm:Cameron}. 

For Case 3, when $G$ has neither isolated nor universal vertices, we run the argument from Theorem \ref{thm:Cameron} without changing 
the presentation of $G$. The least value $n$ exists in the standard natural numbers, and the partition pieces $X_0$ and $X_1$ are 
computable because they are defined with bounded quantifiers. 
\end{proof}

Our next goal is to show the shift of computable presentations in 
Theorem \ref{thm:presentation} is necessary to get a strong effectiveness result that considers the partition pieces up to classical isomorphism. 

Let $K_{< \omega}^{\infty}$ denote the graph consisting of infinity many disjoint copies of $K_n$ for each $n \geq 1$. We say that a copy of $K_n$ inside 
$K_{< \omega}^{\infty}$ is \textit{finished} if it is not a subgraph of a larger $K_m$ inside $K_{< \omega}^{\infty}$. 

There is a nice computable copy $H$ of $K_{< \omega}^{\infty}$ for which there is a computable function $f$ such that each vertex $x$ sits in a 
finished copy of $K_{f(x)}$. There are many computable partitions $H = X_0 \sqcup X_1$ such that neither 
$X_0$ nor $X_1$ is classically 
isomorphic to $K_{< \omega}^{\infty}$. For example, let $X_0$ be the set of isolated nodes (i.e.~those for which $f(x)=1$), or more generally, let 
$X_0$ be the set of all nodes for which $f(x)=n$ for any fixed $n$. 

However, $K_{< \omega}^{\infty}$ also has computable copies which are less uniformly constructed. In the next theorem, we build a computable $G \cong 
K_{< \omega}^{\infty}$ such that 
every computable partition $G = X_0 \sqcup X_1$ has at least one $X_i$ classically isomorphic to $K_{< \omega}^{\infty}$. 

The isomorphism type of $K_{< \omega}^{\infty}$ has several properties that make it suitable for this construction. Deleting finitely many vertices 
doesn't change its isomorphism type, and neither does adding countably many disjoint copies of finished graphs $K_n$ for each $n \in \omega$. 
Moreover, a subgraph of $K_n$ is isomorphic to $K_m$ for some $m \leq n$. Therefore, if $K_{< \omega}^{\infty} = X_0 \sqcup X_1$, 
then each finished component of $X_i$ is a copy of $K_m$ for some $m$. It follows that $X_i$ is isomorphic to $K_{< \omega}^{\infty}$ 
as long as it contains infinitely many finished copies of $K_m$ for each $m$, i.e.~we do not have to worry about what other finished components $X_i$ contains. 

In the proof of the following theorem, it is convenient to regard each $\Phi_e$ as $\{ 0,1 \}$-valued and to use $K_0$ to denote the empty set. 

\begin{thm}
\label{thm:diagonalize}
There is a computable $G \cong K_{< \omega}^{\infty}$ such that for every computable partition $G = X_0 \sqcup X_1$, either $X_0$ or $X_1$ is classically 
isomorphic to $K_{< \omega}^{\infty}$.
\end{thm}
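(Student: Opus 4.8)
The plan is to build a computable graph $G$ on the vertex set $\mathbb{N}$ by a stagewise, injury-free construction, making $G$ a disjoint union of finite cliques and meeting, for all indices $e$ and all $p,q\geq 1$, the requirement $R_{e,p,q}$: if $\Phi_e$ is total, then, writing $X_i=\{x:\Phi_e(x)=i\}$, either the induced subgraph on $X_0$ has infinitely many components isomorphic to $K_p$, or the induced subgraph on $X_1$ has infinitely many components isomorphic to $K_q$. These requirements suffice: every computable partition $G=X_0\sqcup X_1$ has the form $X_i=\{x:\Phi_e(x)=i\}$ for a total $\{0,1\}$-valued $\Phi_e$, and both $X_0$ and $X_1$, being induced subgraphs of $G$, are disjoint unions of finite cliques, so by the observations preceding the theorem $X_i\cong K_{< \omega}^{\infty}$ precisely when $X_i$ has infinitely many components isomorphic to $K_m$ for every $m\geq 1$. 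Hence if neither side were isomorphic to $K_{< \omega}^{\infty}$ we could fix $m_0$ and $m_1$ so that $X_0$ has only finitely many components isomorphic to $K_{m_0}$ and $X_1$ only finitely many isomorphic to $K_{m_1}$, directly contradicting $R_{e,m_0,m_1}$.

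To meet $R_{e,p,q}$ I would assign it infinitely many pairwise disjoint \emph{blocks}, each of which will become a finite clique component of $G$, and grow them one at a time. To grow the current block $B$: add a fresh vertex and join it by edges to the vertices already in $B$; wait for $\Phi_e$ to converge on it before doing anything else with $B$; once it is colored, increment the count $c_0$ of vertices of $B$ colored $0$ or the count $c_1$ of those colored $1$; if $c_0=p$ or $c_1=q$, freeze $B$ and pass to the next block, and otherwise repeat. Each block freezes within $p+q-1$ vertices, and at the moment it freezes it has exactly $p$ of its vertices colored $0$ — so it contributes a $K_p$ component to $X_0$, those vertices forming a clique of $X_0$ with no edges leaving the block — or exactly $q$ colored $1$, in which case it contributes a $K_q$ component to $X_1$. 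If $\Phi_e$ is total, every block eventually freezes, so infinitely many blocks freeze, and by the pigeonhole principle infinitely many freeze on the same side, which gives the conclusion of $R_{e,p,q}$. To ensure $G\cong K_{< \omega}^{\infty}$ I would add, for each $m\geq 1$, a background task that builds one fresh $K_m$ component after another. To keep $G$ a computable graph with domain $\mathbb{N}$, the construction is organized so that at stage $s$ the vertex $s$ enters $G$ with all its edges decided permanently: it is given to whichever task is scheduled at stage $s$, under a computable schedule in which every task is scheduled infinitely often, and if that task is a requirement currently waiting on $\Phi_e$, vertex $s$ is instead placed as an isolated vertex. No two tasks ever interfere and nothing is retracted, so no priority argument is needed; one only interleaves the tasks so that waiting on a divergent computation of $\Phi_e$ does not stall the construction.

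The two delicate points — the rest being routine bookkeeping — are the formulation of $R_{e,p,q}$ and the handling of non-total $\Phi_e$. The naive approach, asking a single requirement to force an entire side of some partition to be a copy of $K_{< \omega}^{\infty}$, does not work: which side a block lands on is chosen by $\Phi_e$, and different blocks can land on different sides, so one cannot coherently drive one fixed side to contain copies of $K_m$ for all $m$ at once. The key observation is that a bad partition is already witnessed by a single pair of component sizes, one per side, and this is exactly what lets $R_{e,p,q}$ collapse to the simple dichotomy above. As for partial $\Phi_e$: a requirement may then wait forever on the last vertex it added to its current block, but this is harmless, since an unfinalized block is still a finite clique component, so $G$ remains a computable copy of $K_{< \omega}^{\infty}$, and the requirement is vacuously met because $\Phi_e$ then defines no partition. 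With these in hand, verifying that $G$ is a computable graph with domain $\mathbb{N}$, that $G\cong K_{< \omega}^{\infty}$, and that each $R_{e,p,q}$ is satisfied is straightforward, using the structural facts about $K_{< \omega}^{\infty}$ recalled before the theorem.
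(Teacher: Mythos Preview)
Your argument is correct. The core mechanism---grow a clique one vertex at a time, wait for $\Phi_e$ to color each new vertex, and stop once one side hits its target size---is the same as the paper's, but the decomposition into requirements is genuinely different. The paper uses a single requirement $R_e$ per index and dynamically updates the two target sizes via a pairing-function trick: it keeps counters $m^e_0,m^e_1$, aims at each round for $K_{\pi_1(m^e_i)+1}$ on side $i$, increments $m^e_i$ on success, and argues that if $\Phi_e$ is total then at least one counter goes to infinity, so $\pi_1$ of it cycles through all sizes on that side. Your version instead splits the work across requirements $R_{e,p,q}$ with fixed targets and closes with a clean contrapositive: if neither side were a copy of $K_{<\omega}^{\infty}$, a single pair $(m_0,m_1)$ of missing sizes would contradict $R_{e,m_0,m_1}$. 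Your route avoids the $\pi_1$ bookkeeping and the limit argument at the cost of infinitely many requirements per index; the paper's route is more compact but needs the extra trick to make one requirement do all the work. Both are injury-free and rely on the same structural facts about $K_{<\omega}^{\infty}$.
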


\begin{proof}
We build $G$ computably in stages with $G_s$ denoting the graph at the end of stage $s$. The set of vertices in $G_s$ will be a finite initial segment of 
$\omega$. We neither add nor delete edges between vertices in $G_s$ after stage $s$. 

Let $\pi_1$ denote the projection function onto the first coordinate. As $s$ goes to 
infinity, the values of $\pi_1(s)$ hit each natural number infinitely often. We use this property to ensure $G$ is isomorphic to $K_{< \omega}^{\infty}$. At the start of 
stage $s$, we add $\pi_1(s)+1$ new vertices and put edges between them to form a finished copy of $K_{\pi_1(s)+1}$. This action ensures $G$ 
has a subgraph isomorphic to $K_{< \omega}^{\infty}$. Therefore, as long as each additional finished component in $G$ has the form $K_n$ for some $n$, 
$G$ will be isomorphic to $K_{< \omega}^{\infty}$. 

For each index $e$, we let $X^e_0 = \{ x : \Phi_e(x) = 0 \}$ and $X^e_1 = \{ x : \Phi_e(x) = 1 \}$. If $\Phi_e$ is total, these sets partition $G$. We list our  
requirements as follows. 
\[
R_e: \text{If } \Phi_e \text{ is total}, \text{ then } X^e_0 \cong K_{< \omega}^{\infty} \text{ or } X^e_1 \cong K_{< \omega}^{\infty}.
\]
To satisfy this requirement, it suffices to ensure that at least one $X_i$ contains infinitely many finished copies of $K_n$ for each $n$. 

The $R_e$ module keeps three parameters: numbers $m^e_0$ and $m^e_1$, and a finite set $C^e$. Each parameter will change during the construction. We 
typically suppress denoting the stage, but write $m^e_{i,s}$ and $C^e_s$ (and later $C^e_{i,s}$) when we need to explicitly reference the stage. 
The numbers $m^e_0$ and $m^e_1$ track the finished graphs $K_n$ we 
have seen in $X_0$ and $X_1$ respectively. For each $n < m^e_i$, we will already have forced a finished copy of $K_{\pi_1(n)+1}$ into $X_i$ with 
separate copies when $n \neq n'$ and $\pi_1(n) = \pi_1(n')$. The current goal for $R_e$ is to create a finished copy of $K_{\pi_1(m^e_0)+1}$ in $X_0$ or 
$K_{\pi_1(m^e_1)+1}$ in $X_1$.  

To meet this goal, add a new element $y_0$ to $G_s$ with no edges, set $C^e = \{ y_0 \}$ and $C^e_i = \{ y \in C^e : \Phi_e(y) = i \}$ for $i < 2$. 
Currently, $C^e$ is a copy of $K_1$ and each $C^e_i$ is a copy of $K_0$ (i.e.~is empty). If $\Phi_e(y_0)$ halts at a future stage $s_0$,  
one of the $C^e_i$ sets becomes a copy of $K_1$ and the other remains a copy of $K_0$. If $C^e_i \cong K_{\pi_1(m^e_i)+1}$ for an $i < 2$, 
then we have met our goal on the $X_i$ side. In this case, set $m^e_i = m^e_i+1$, empty $C^e$, leave 
$m^e_{1-i}$ unchanged, and restart the $R_e$ module with the new parameters. 

Otherwise, we add a new element $y_1$ to $G_{s_0}$, connect it to $y_0$, and expand $C^e = \{ y_0, y_1 \}$ to a copy of $K_2$. If   
$\Phi_e(y_1)$ later halts at $s_1$, one of the $C^e_i$ sets grows by one element. If $C^e_i \cong K_{\pi_1(n_i)+1}$, then we have met our goal 
on the $X_i$ side, and so we increment $m^e_i$, empty $C^e$, and restart the $R_e$ module with the new parameters. If we have not met the goal 
on either side, add a new vertex $y_2$ to $G_{s_1}$, connect it to $y_0$ and $y_1$, expand $C^e = \{ y_0, y_1, y_2 \}$ to a copy of $K_3$, and repeat the 
process above.

We cannot cycle through this process infinitely often because when $\Phi_e(y_k)$ halts, we have $|C^e_0|+|C^e_1|=k+1$. Therefore, before 
$|C^e| = \pi_1(m^e_0) + \pi_1(m^e_1) +2$, one of the $C^e_i$ sets must reach $|C^e_i| = \pi_1(m^e_i)+1$, and so satisfies 
$C^e_i \cong K_{\pi_1(m^e_i)+1}$, meeting our goal on the $X_i$ side. 

When we restart the $R_e$ module at a stage $s$, we empty $C^e$ (i.e.~set $C^e_s = \emptyset$) and begin again with $C^e_s$ starting a new connected 
component. After this stage, we never add vertices to the old component $C^e_{s-1}$. Therefore, $C^e_{s-1}$ is a finished component in $G$, each  
$C^e_{i,s-1}$ is finished in $X_i$, and so we have created a finished copy of $K_{\pi_1(m^e_{i,s-1})+1}$ in $X_i$ for the $i < 2$ such that 
$C^e_{i,s-1} \cong K_{\pi_1(m^e_{i,s-1})+1}$. 

Furthermore, each time we restart the $R_e$ module, one of the $m^e_i$ parameters is incremented. 
Therefore, if $\Phi_e$ is total, the values of at least one $m^e_i$ go to infinity, causing $\pi_1(m^e_i)$ to cycle through each number 
infinitely often. It follows that $X_i$ contains infinitely many finished copies of $K_n$ for each $n \geq 1$ and hence is isomorphic to $K_{<\omega}^{\infty}$. 

The formal construction proceeds as follows. At stage 0, set $G_0 = C^0 = \{ 0 \}$, set $m^e_0 = m^e_1 = 0$ for all $e$, and set $C^e = \emptyset$ for $e > 0$. 

At stage $s > 0$, let $k = \pi_1(s)+2+|\{ e<s : \Phi_{e,s}(\max C^e) \text{ halts} \}|$, $X_s = \{ x_0, \cdots, x_{k-1} \}$ be the $k$ least unused numbers, and 
$G_s = G_{s-1} \cup X_s$. Add edges between $x_i$ and $x_j$ for $i \neq j \leq \pi_1(s)$ to create a finished copy of $K_{\pi_1(s)+1}$. 
Set $C^s = \{ x_{\pi_1(s)+1} \}$ and leave the remaining parameters for $e \geq s$ unchanged. 

Consider the indices $e < s$ in order. If $\Phi_{e,s}(\max C^e)$ does not halt, leave $m^e_i$ and $C^e$ unchanged and go to $e+1$.  
If $\Phi_{e,s}(\max C^e)$ halts, then check whether $C^e_i = \{ z \in C^e : \Phi_{e,s} = i \} \cong K_{\pi_1(m^e_i)+1}$ for some $i < 2$. If 
not, set $C^e_s = C^e_{s-1} \cup \{ x_\ell \}$ where $x_{\ell}$ is the least unused number from $X_s$. Connect $x_{\ell}$ to each element of $C^e_{s-1}$ so that 
$C^e_s \cong K_{|C^e|}$. Leave $m^e_0$ and $m^e_1$ unchanged and go to $e+1$. Finally, if $C^e_i \cong K_{\pi_1(m^e_i)+1}$, then 
set $C^e_s = \{ x_{\ell} \}$, $m^e_{i,s} = m^e_{i,s-1}+1$, $m^e_{1-i,s} = m^e_{1-i,s-1}$, and go to $e+1$. 

This completes the formal construction. The details of the verification are essentially contained in the informal description 
above as there is no interaction between the requirements with different indices. 
\end{proof}

Recall that Theorem \ref{thm:isolated} says every computable graph has a computable copy in which the set of isolated nodes is 
computable. We end this section with its proof.

\begin{proof}
Fix a computable graph $G$ and assume without loss of generality that the set of vertices is $\omega$. Suppose the set of isolated nodes is not computable, and 
hence there are infinitely many isolated nodes as well as infinitely many non-isolated nodes. Let $G_s$ denote the subgraph on $\{ 0, \ldots, s \}$. 

We build a computable graph $H$ and a $\Delta^0_2$ isomorphism $f: G \rightarrow H$ in stages such that the isolated nodes in $H$ are exactly the even numbers. 
At stage $s$, we define an injection $f_s$ on $G_s$ and let $H_s$ denote the range of $f_s$ with edge relation defined by $E_{H_s}(n,m)$ 
if and only if $E_{G_s}(f_s^{-1}(n),f_s^{-1}(m))$. Thus, by definition, $f_s$ will be an isomorphism from $G_s$ to $H_s$. 
To make the edge relation on $H$ computable, we ensure that $E_{H_s}(n,m)$ holds if and only if $E_{H_t}(n,m)$ holds for all $t \geq s$ such that $n,m \in H_t$. 

The domains of the graphs $H_s$ will not necessarily be monotonic. Suppose $x$ is isolated in $G_s$, so we set $f_s(x) = 2n$ to map $x$ to 
an even number in $H_s$. If we discover $x$ is not isolated in $G_{s+1}$ by seeing $E_G(x,s+1)$, then we need to shift $f_{s+1}(x)$ to an odd number.  
We collect the nodes $x_0 < \cdots < x_\ell$ that were isolated in $G_s$ but are connected 
to $s+1$, and we map these nodes to the least odd numbers not in $H_s$. Next, we collect the nodes $a_0 < \cdots < a_j$ (if any) that remain isolated in 
$G_{s+1}$ and map these elements onto an initial segment of the even numbers. Since the number of isolated nodes has gone down from $G_s$ to $G_{s+1}$, 
at least one even number in $H_s$ is no longer in $H_{s+1}$. However, because $G$ has infinitely many isolated nodes, each even number will eventually be 
permanently in the range of the $f_s$ maps. 

We now give the construction. At stage $0$, set $f_0(0) = 0$, noting that $0$ is isolated in $G_0$. 
At stage $s+1$, we define $f_{s+1}$ as follows.

Case 1: $s+1$ is isolated in $G_{s+1}$. Let $m$ be the least even number such that $m \not \in H_s$. Define $f_{s+1}(x) = f_s(x)$ for 
$x \leq s$ and $f_{s+1}(s+1) = m$. 

Case 2: $s+1$ is not isolated in $G_{s+1}$ but is not attached to any nodes which are isolated in $G_s$. Let $k$ be the least odd number such that $k \not \in H_s$. 
Define $f_{s+1}(x) = f_s(x)$ for $x \leq s$ and $f_{s+1}(s+1) = k$. 

Case 3: $s+1$ is not isolated in $G_{s+1}$ and it is attached to at least one node which is isolated in $G_s$. Let $x_0 < \cdots < x_\ell$ denote the nodes which are 
isolated in $G_s$ but are connected to $s+1$ in $G_{s+1}$. Let $a_0 < \ldots < a_j$ denote the nodes (if any) which are isolated in $G_{s+1}$. 
Let $k_0 < \cdots < k_{\ell+1}$ denote the least odd numbers not in $H_s$. For $x \leq s+1$, define 
\[
f_{s+1}(x) = \left\{ 
\begin{array}{ll}
2i & \text{if } x = a_i \\
k_i & \text{if } x = x_i \\
k_{\ell+1} & \text{if } x = s+1 \\
f_s(x) & \text{otherwise}
\end{array}
\right.
\]

This completes the construction. It is straightforward to check a number of properties by induction on $s$. 
First, each function $f_s$ is injective. Second, $H_s$ consists of the union of an initial segment of the even numbers 
and an initial segment of the odd numbers. Third, $x$ is isolated in $G_s$ if and only if $f_s(x)$ is even. 
Therefore, if $n \in H_s$ is even, then $\neg E_{H_s}(n,m)$ for all $m \in H_s$. 
Fourth, if $m \in H_s$ and $m \not \in H_{s+1}$, then $m$ is even. Fifth, if $m$ is odd and $f_s(x)=m$, then $f_t(x)=m$ for all $t \geq s$.

\begin{lem}
\label{lem:intersect}
For $s < t$ and $m,n \in H_s \cap H_t$, $E_{H_s}(m,n)$ if and only if $E_{H_t}(m,n)$.
\end{lem}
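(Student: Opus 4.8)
The plan is to prove the biconditional by a case split on the parities of $m$ and $n$ in $H$, relying on the five properties established by induction on $s$ immediately before the lemma statement. The two that do all the work are the third (a vertex $x$ is isolated in $G_s$ if and only if $f_s(x)$ is even) and the fifth (once an odd label is assigned, it is permanent). The guiding observation is that an edge in any $H_s$ can only hold between two odd vertices, since an even vertex is the image of an isolated vertex of $G_s$.

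First I would treat the case where at least one of $m,n$ is even, say $m$. By the third property $f_s^{-1}(m)$ is isolated in $G_s$ and $f_t^{-1}(m)$ is isolated in $G_t$, so $m$ is nonadjacent in $H_s$ to every vertex of $H_s$ and nonadjacent in $H_t$ to every vertex of $H_t$; in particular $\neg E_{H_s}(m,n)$ and $\neg E_{H_t}(m,n)$, so the biconditional holds vacuously. Note this case does not even use the hypothesis $s<t$.

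Next I would treat the case where both $m$ and $n$ are odd. Since $f_s$ has domain $\{0,\ldots,s\}$, we have $f_s^{-1}(m), f_s^{-1}(n)\le s$; write $a=f_s^{-1}(m)$ and $b=f_s^{-1}(n)$. By the fifth property, $f_t(a)=m$ and $f_t(b)=n$, so $f_t^{-1}(m)=a$ and $f_t^{-1}(n)=b$ as well, and $a,b\le s<t$. By the definition of the edge relations on $H_s$ and $H_t$, $E_{H_s}(m,n)\iff E_{G_s}(a,b)$ and $E_{H_t}(m,n)\iff E_{G_t}(a,b)$. Since $G_s$ and $G_t$ are both induced subgraphs of the single fixed graph $G$ and $a,b$ lie in both initial segments, $E_{G_s}(a,b)\iff E_G(a,b)\iff E_{G_t}(a,b)$, which chains the two biconditionals together.

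There is essentially no genuine obstacle: the only bookkeeping point is to keep straight which domain each $f^{-1}$ is taken in, and to remember that $G$ is a fixed graph whose edges never change, so that the adjacency of $a$ and $b$ is the same whether read off $G_s$, $G_t$, or $G$. Everything then reduces to the parity split, which the five listed properties make immediate. This lemma is precisely what licenses defining $E_H(n,m)$ as the eventual common value of $E_{H_s}(n,m)$, so that $H$ is a bona fide computable graph and $f=\bigcup_s f_s$ (more precisely, its stable limit) is a well-defined $\Delta^0_2$ isomorphism $G\to H$.
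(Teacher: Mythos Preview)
Your proof is correct and follows essentially the same approach as the paper: the same parity case split, the same appeal to the third property for the even case, and the same use of the fifth property to show that the odd preimages are stable between stages $s$ and $t$, reducing both edge relations to $E_G(a,b)$. The paper's version is slightly terser but the argument is identical.
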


\begin{proof}
If $m$ or $n$ is even, then by the third property above, $\neg E_{H_s}(n,m)$ and $\neg E_{H_t}(n,m)$. Therefore, assume $m$ and $n$ are odd. 
Fix $x,y \in G_s$ with $f_s(x) = m$ and $f_s(y) = n$. By the fifth property, $f_t(x) = m$ and $f_t(y) = n$, and so by definition, 
$E_{H_s}(n,m)$ and $E_{H_t}(n,m)$ are each equivalent to $E_G(x,y)$. 
\end{proof}

\begin{lem}
\label{lem:limits}
For each $x$, there is a stage $s$ such that $f_t(x) = f_s(x)$ for all $t \geq s$.
\end{lem}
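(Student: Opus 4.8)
The plan is to fix a vertex $x$ and argue by cases according to whether $x$ is isolated in $G$. The case where $x$ is \emph{not} isolated in $G$ is immediate from the properties already recorded: if $y$ is a neighbour of $x$ in $G$ and $s_0 = \max(x,y)$, then $E_{G_{s_0}}(x,y)$, so $x$ is not isolated in $G_{s_0}$, whence $f_{s_0}(x)$ is odd by the third property; the fifth property then gives $f_t(x) = f_{s_0}(x)$ for every $t \geq s_0$, so $f_s(x)$ stabilizes.

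So assume $x$ is isolated in $G$. Then $x$ has no neighbours at all, hence is isolated in $G_s$ for every $s \geq x$, and by the third property $f_s(x)$ is even for all such $s$. To pin down its value I would append one more property to the list of those checked by induction on $s$: for every $s$, the map $f_s$ carries the isolated vertices of $G_s$ \emph{order-isomorphically} onto the even part of $H_s$; equivalently, whenever $x$ is isolated in $G_s$ one has $f_s(x) = 2\,|\{\, y < x : y \text{ is isolated in } G_s \,\}|$. Write $r_s(x)$ for the cardinality on the right. For $s \geq x$ the vertices of $G_s$ below $x$ are exactly $0, \dots, x-1$, a fixed finite set; since edges are only ever added as $s$ grows, a vertex in this set that is non-isolated in $G_s$ stays non-isolated in $G_{s+1}$, so $r_s(x)$ is non-increasing in $s$ for $s \geq x$. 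Being also bounded below by $0$, it is eventually constant --- in fact constant equal to $|\{\, y < x : y \text{ is isolated in } G \,\}|$, because each $y < x$ is either isolated in $G$ (and so isolated in every $G_s$ with $s \geq y$) or non-isolated in $G$ (and so non-isolated in $G_s$ from some stage on). Since $f_s(x) = 2\,r_s(x)$ for all $s \geq x$, it follows that $f_s(x)$ is eventually constant.

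The substance of the argument is the inductive verification of the new order-preservation property, and I expect that to be the only real obstacle. The two cases in which $s+1$ is not isolated in $G_{s+1}$ are routine: when $s+1$ is attached to no vertex isolated in $G_s$, the isolated set of $G_{s+1}$ and the restriction of $f$ to it are unchanged; when it is attached to some such vertex, the construction explicitly re-lists the surviving isolated vertices $a_0 < \dots < a_j$ and sends $a_i$ to $2i$, which is manifestly order-preserving onto an initial segment of the evens. The delicate case is when $s+1$ is isolated in $G_{s+1}$: then the isolated set of $G_{s+1}$ is that of $G_s$ together with $s+1$, the old vertices keep their $f_s$-values, and $s+1$ goes to the least even number missing from $H_s$; using the inductive hypothesis and the second property (the even part of $H_s$ is an initial segment) one checks that this missing even number is precisely twice the size of the old isolated set, so the new largest vertex $s+1$ is assigned the new largest even value and order is preserved. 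Once this is in hand the lemma is exactly the monotonicity observation above.
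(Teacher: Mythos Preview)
Your proof is correct and follows essentially the same approach as the paper's. Both split into the isolated and non-isolated cases, handle the non-isolated case by invoking the fifth property once $f_s(x)$ becomes odd, and handle the isolated case via a monotonicity argument. The one difference is that you make explicit the order-preservation property (that $f_s$ sends the isolated vertices of $G_s$ in increasing order onto an initial segment of the evens, so $f_s(x) = 2\,r_s(x)$), whereas the paper leaves this implicit: the paper simply asserts that when some $y<x$ ceases to be isolated the value $f_{s+1}(x)$ is a \emph{smaller} even number than $f_s(x)$, and concludes the sequence stabilizes because it can drop only finitely often. Your explicit formula $f_s(x) = 2\,|\{\,y<x : y \text{ isolated in } G_s\,\}|$ is exactly what justifies that inequality, so the two arguments coincide once unpacked.
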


\begin{proof}
Suppose $x$ is not isolated and let $y$ be the least node such that $E_G(x,y)$. If $y < x$, then 
$x$ is not isolated in $G_x$ and therefore $f_x(x)$ is odd. If $x < y$, then at stage $y$, the construction acts in Case 3 and the value $f_y(x)$ is odd. 
In either case, once $x$ is mapped to an odd number, $f_s(x)$ has stabilized. 

Suppose $x$ is isolated and so $f_x(x)$ is even. For $s \geq x$, $f_{s+1}(x) \neq f_s(x)$ only if a node $y < x$ is isolated 
in $G_s$ but is connected to $s+1$. In this case, $f_{s+1}(y)$ becomes odd and $f_{s+1}(x) < f_s(x)$ is a smaller even number. This drop can happen at most
 finitely often before reaching a limiting value. 
\end{proof}

\begin{lem}
\label{lem:onto}
For each $n$, there is an $x \in G$ and a stage $s$ such that $f_t(x) = n$ for all $t \geq s$.
\end{lem}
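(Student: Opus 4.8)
The plan is to prove Lemma \ref{lem:onto} by splitting into two cases according to the parity of $n$, and in each case producing a single vertex whose $f_s$-value is eventually, and permanently, equal to $n$.

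For odd $n$: every non-isolated vertex $v$ of $G$ is eventually mapped to an odd number. Indeed, the analysis in the proof of Lemma \ref{lem:limits} shows that if $y$ is the least neighbor of $v$, then the construction places $v$ at an odd number no later than stage $\max(v,y)$, and by the fifth property recorded above this assignment is then permanent. Since $G$ has infinitely many non-isolated vertices and each $f_s$ is injective, infinitely many distinct odd numbers are eventually (and permanently) in the ranges of the maps $f_s$. Because the odd part of $H_s$ is always an initial segment of the odd numbers (second property) and no odd number ever leaves $H_s$ (fourth property), this forces the odd part of $H_s$ to be unbounded as $s \to \infty$. Hence, given odd $n$, let $s$ be least with $n \in H_s$ and write $n = f_s(x)$; the fifth property gives $f_t(x) = n$ for all $t \geq s$.

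For even $n = 2m$: I would first record one more invariant, provable by the same induction that gives the five listed properties, namely that if the vertices isolated in $G_s$ are listed in increasing order as $v_1 < v_2 < \cdots < v_c$, then $f_s(v_k) = 2(k-1)$ for every $k$. This is checked case by case: in Cases 1 and 2 the isolated vertices and their (even) values are unchanged, with $s+1$ being appended at the next even number in Case 1; in Case 3 the vertices $x_0 < \cdots < x_\ell$ are deleted from the list and the remaining isolated vertices are re-mapped order-preservingly onto $\{0,2,\dots,2j\}$. Now let $I = \{i_0 < i_1 < \cdots\}$ be the (infinite) set of vertices isolated in $G$. Every vertex $v < i_m$ with $v \notin I$ has a least neighbor, hence acquires a neighbor by some finite stage; let $S$ be the maximum of these finitely many stages together with $i_m$. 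For $s \geq S$, the vertices isolated in $G_s$ include $i_0,\dots,i_m$ (these have no neighbors at all), and include no vertex $\leq i_m$ other than these, so $i_0 < \cdots < i_m$ are the $m+1$ smallest vertices isolated in $G_s$. By the invariant, $f_s(i_m) = 2m$ for all $s \geq S$, which proves the claim with $x = i_m$.

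The main obstacle is the even case, and specifically the bookkeeping needed to identify the vertex landing on $2m$: a priori the $f_s$-preimage of $2m$ changes infinitely often, since each time a cluster of previously isolated vertices is absorbed in Case 3 the surviving isolated vertices below them slide down to smaller even numbers, so one cannot track a single vertex forward through the construction. The order-preserving invariant is exactly what circumvents this, converting the unstable condition ``$f_s(x) = 2m$'' into the eventually-stable combinatorial condition ``$x$ is the $(m+1)$st isolated vertex of $G_s$''. Once that translation is in place, the remaining point — that the genuinely isolated vertices eventually occupy the bottom of that list — is routine, since only finitely many vertices below $i_m$ fail to be isolated in $G$, and each of them exposes a neighbor after finitely many stages.
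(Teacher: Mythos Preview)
Your proof is correct and follows essentially the same approach as the paper: split on the parity of $n$, use permanence of odd assignments for odd $n$, and for even $n=2m$ identify the $(m+1)$st genuinely isolated vertex as the eventual preimage of $2m$. The main difference is that you explicitly record and verify the order-preserving invariant (isolated vertices of $G_s$ map in order onto an initial segment of the evens), which the paper uses tacitly when it concludes $f_t(a_{n/2})=n$ from ``$f_t$ maps these nodes onto an initial segment of the even numbers''; making this invariant explicit is a genuine improvement in clarity. One small slip in your commentary: when previously isolated vertices become non-isolated in Case~3, it is the surviving isolated vertices \emph{above} them (not below) that slide down to smaller even numbers --- but this does not affect the argument itself.
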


\begin{proof}
Suppose $n$ is odd. Fix $s$ such that $G_s$ contains at least $(n+1)/2$ non-isolated nodes.  
Since $f_s$ maps the non-isolated nodes of $G_s$ onto an initial segment of the 
odd numbers, there is an $x \in G_s$ such that $f_s(x) = n$. Because $n$ is odd, $f_t(x) = f_s(x)=n$ for all $t \geq s$.

Suppose $n$ is even. Let $a_0 < \cdots < a_{n/2}$ be an initial segment of the isolated nodes in $G$. Fix $s$ such that these nodes form an initial segment of the isolated 
nodes in $G_s$. For every $t \geq s$, $f_t$ maps these nodes onto an initial segment of the even numbers, and therefore, $f_t(a_{n/2}) = n$ for all $t \geq s$. 
\end{proof}

Define $H = (\omega, E_H)$ with $E_H(n,m)$ holds if and only if $E_{H_s}(n,m)$ holds for the least $s$ with $n,m \in H_s$. 
By Lemma \ref{lem:intersect}, $E_H(n,m)$ holds if and only if $E_{H_s}(n,m)$ holds for some, or equivalently all,  
$s$ with $n,m \in H_s$. It follows that $n$ is isolated in $H$ if and only if $n$ is even. 

By Lemmas \ref{lem:limits} and \ref{lem:onto}, the function $f = \lim_s f_s$ is total and onto $\omega$. It is injective because each $f_s$ is injective, so 
$f: \omega \rightarrow \omega$ is a bijection. To finish the proof, we show that $f$ is an isomorphism between $G$ and $H$. 

\begin{lem}
$f:G \rightarrow H$ is an isomorphism. 
\end{lem}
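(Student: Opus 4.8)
The plan is to verify that the bijection $f = \lim_s f_s$ preserves and reflects the edge relation. Since $f$ is already known to be a bijection, it suffices to show that for all $x, y \in \omega$, we have $E_G(x,y)$ if and only if $E_H(f(x), f(y))$. First I would fix $x, y$ and use Lemma \ref{lem:limits} to choose a stage $s$ large enough that $x, y \le s$, both $f_t(x) = f_s(x) =: n$ and $f_t(y) = f_s(y) =: m$ for all $t \ge s$, and additionally (if $E_G(x,y)$ actually holds) large enough that the witnessing edge has appeared, i.e.\ $E_{G_s}(x,y)$ holds exactly when $E_G(x,y)$ holds. The point is that once the $f$-values of $x$ and $y$ have stabilized, the pair $(n,m)$ lies in $H_t$ for all $t \ge s$, and by construction $E_{H_t}(n,m)$ is equivalent to $E_{G_t}(f_t^{-1}(n), f_t^{-1}(m)) = E_{G_t}(x,y)$.

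The one subtlety is that $E_{H_t}(n,m)$ is only defined in terms of $f_t^{-1}$ when $n$ and $m$ are simultaneously in the range of $f_t$; but that is exactly guaranteed for $t \ge s$ by our choice of $s$, since $f_t(x) = n$ and $f_t(y) = m$. So for all $t \ge s$, $E_{H_t}(n,m) \Leftrightarrow E_{G_t}(x,y)$, and since $E_{G_t}(x,y)$ is constant (equal to $E_G(x,y)$) for $t \ge s$, and since $E_H(n,m)$ is computed as $E_{H_{s'}}(n,m)$ for the least $s'$ with $n,m \in H_{s'}$ — which by Lemma \ref{lem:intersect} agrees with $E_{H_t}(n,m)$ for any $t \ge s'$, in particular for $t \ge \max(s,s')$ — we conclude $E_H(n,m) \Leftrightarrow E_G(x,y)$. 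This is precisely $E_H(f(x),f(y)) \Leftrightarrow E_G(x,y)$, as desired.

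I do not expect any real obstacle here: the heavy lifting has already been done in Lemmas \ref{lem:intersect}, \ref{lem:limits}, and \ref{lem:onto}, together with the five ``straightforward to check'' properties established by induction. The proof is essentially bookkeeping: pick a stage past which all relevant values have stabilized, and read off that the edge relation on $H$ at that stage matches the edge relation on $G$ through the (now stable) isomorphism $f_s$. If anything requires a word of care, it is simply being explicit that $E_H(n,m)$ is well-defined independently of which stage $s'$ one uses to evaluate it — but this is exactly the content of Lemma \ref{lem:intersect}, so it can be invoked directly.

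\begin{proof}
Let $x, y \in \omega$; we show $E_G(x,y)$ if and only if $E_H(f(x),f(y))$. By Lemma \ref{lem:limits}, fix a stage $s \ge \max(x,y)$ such that $f_t(x) = f_s(x) =: n$ and $f_t(y) = f_s(y) =: m$ for all $t \ge s$, and (enlarging $s$ if necessary) such that $E_{G_t}(x,y)$ holds for all $t \ge s$ if and only if $E_G(x,y)$ holds. For every $t \ge s$ we have $n, m \in H_t$, and by the definition of the edge relation on $H_t$,
\[
E_{H_t}(n,m) \iff E_{G_t}\big(f_t^{-1}(n), f_t^{-1}(m)\big) \iff E_{G_t}(x,y) \iff E_G(x,y).
\]
Now let $s'$ be the least stage with $n, m \in H_{s'}$, so that $E_H(n,m)$ holds if and only if $E_{H_{s'}}(n,m)$ holds. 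Since $s' \le s$, Lemma \ref{lem:intersect} gives $E_{H_{s'}}(n,m) \iff E_{H_s}(n,m)$, and by the displayed equivalence the latter holds if and only if $E_G(x,y)$ holds. Hence $E_H(f(x),f(y)) = E_H(n,m)$ holds if and only if $E_G(x,y)$ holds. Since $f$ is a bijection by Lemmas \ref{lem:limits} and \ref{lem:onto} together with the injectivity of each $f_s$, it follows that $f$ is an isomorphism from $G$ to $H$.
\end{proof}
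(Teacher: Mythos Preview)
Your proof is correct and follows essentially the same approach as the paper: fix a stage $s$ past which $f_s(x)$ and $f_s(y)$ have stabilized, then chain $E_G(x,y) \Leftrightarrow E_{G_s}(x,y) \Leftrightarrow E_{H_s}(f_s(x),f_s(y)) \Leftrightarrow E_H(f(x),f(y))$, using Lemma~\ref{lem:intersect} for the last step. Your version is slightly more verbose (the enlargement of $s$ to stabilize $E_{G_t}(x,y)$ is unnecessary since $E_{G_s}(x,y)$ already equals $E_G(x,y)$ once $x,y \le s$), but the argument is the same.
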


\begin{proof}
Fix $x,y \in G$ and $s \geq \max \{ x,y \}$ such that $f(x) = f_s(x)$ and $f(y) = f_s(y)$.
\[
E_G(x,y) \Leftrightarrow E_{G_s}(x,y) \Leftrightarrow E_{H_s}(f_s(x),f_s(y)) \Leftrightarrow E_H(f(x),f(y)).
\]
The first equivalence follows from $x,y \in G_s$, the second follows from the definition of $E_{H_s}$, and the third follows because $f(x) = f_s(x)$ and 
$f(y) = f_s(y)$.  
\end{proof}

This completes the proof of Theorem \ref{thm:isolated}. 
\end{proof}

\section{Towards an analysis in $\mathsf{REC}$}
\label{sec:REC}

Theorem \ref{thm:Cameron} holds in $\mathsf{REC}$ if and only if for every computable graph $G$ not isomorphic to $K_{\omega}$, 
$\overline{K}_{\omega}$ or $\mathcal{R}$, there is a computable partition $G = X_0 \sqcup X_1$ such that neither $X_0$ nor $X_1$ is computably isomorphic to $G$. 
(Recall that we do not need to say ``$G$ is not computably isomorphic to $K_{\omega}$, $\overline{K}_{\omega}$ and $\mathcal{R}$'' because 
$K_{\omega}$, $\overline{K}_{\omega}$ 
and $\mathcal{R}$ are computably categorical.) While this full statement remains open, we handle a special case in this section. 

If $G$ has no isolated or universal vertices, then as noted in the proof of Theorem \ref{thm:presentation}, there is a computable partition such that 
neither half is even classically isomorphic to $G$. Therefore, to study Theorem \ref{thm:Cameron} in $\mathsf{REC}$, we can restrict 
our attention to computable graphs 
that have isolated or universal vertices. Moreover, since isolated nodes in $G$ correspond to universal nodes in $\overline{G}$, we can apply 
Proposition \ref{prop:duality} to restrict to computable graphs that have isolated nodes. It follows that Theorem \ref{thm:Cameron} holds in $\mathsf{REC}$ if and only 
if for every computable graph $G$ that has isolated nodes but is not isomorphic to $\overline{K}_{\omega}$, there is a computable partition 
$G = X_0 \sqcup X_1$ such that neither $X_0$ nor $X_1$ is computably isomorphic to $G$. 
We establish this statement under the additional hypothesis that the set of vertices of finite degree is computably enumerable. 

\begin{thm}
\label{thm:REC}
Let $G$ be a computable graph that has isolated vertices but is not isomorphic to $\overline{K}_{\omega}$. If the set of vertices with finite 
degree is c.e., then there is a computable partition $G = X_0 \sqcup X_1$ such that neither $X_0$ nor $X_1$ is computably isomorphic to $G$. 
\end{thm}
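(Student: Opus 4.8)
The plan is to dispatch several cases by direct structural partitions and then to treat the one remaining case by a stagewise construction that uses the c.e.\ enumeration of $F$. Write $N$ for the set of non-isolated vertices of $G$ and $I = V \setminus N$ for the isolated ones; $N$ is always c.e.\ (an edge at $x$ witnesses $x \in N$), so $I$ is co-c.e. By hypothesis $I \neq \emptyset$, and since $G \not\cong \overline{K}_\omega$ we have $N \neq \emptyset$; we may assume $G$ is infinite, as only infinite graphs are at issue. If $N$ is finite, then $N$ is computable, and I take $X_0 = N$, $X_1 = I$: here $X_1 \cong \overline{K}_\omega$ is not isomorphic to $G$, and $X_0$ is finite. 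If $N$ is infinite but computable, I take $X_0 = I$, $X_1 = N$: now $X_0 \cong \overline{K}_\omega \not\cong G$, while $X_1 = N$ has no isolated vertex (a neighbor of a non-isolated vertex is non-isolated) whereas $G$ does, so $X_1 \not\cong G$. Neither of these reductions uses the hypothesis on $F$; it remains to treat the case in which $G$ has infinitely many isolated vertices, infinitely many non-isolated vertices, and $N$ is not computable.

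In this hard case, let $C_0 = \{x : (\exists y < x)\, E(x,y)\}$, a computable subset of $N$. Every edge of $G$ has its larger endpoint in $C_0$, so $C_0$ is a vertex cover, and its complement $M := V \setminus C_0$ — the set of vertices with no smaller neighbor — is an independent set with $M \supseteq I$, hence an infinite completely disconnected induced subgraph. Call $w \in C_0$ \emph{deficient} if every $G$-neighbor of $w$ lies in $M$; equivalently, $w$ has no neighbor larger than itself, so $w$ has finite degree and belongs to $F$. One checks that $w \in C_0$ is isolated in the induced subgraph on $C_0$ if and only if $w$ is deficient. Therefore, if $G$ has only finitely many deficient vertices, I take $X_0 = M$ and $X_1 = C_0$: then $X_0 \cong \overline{K}_\omega \not\cong G$, and $X_1$ has only finitely many isolated vertices while $G$ has infinitely many, so $X_1 \not\cong G$.

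The remaining possibility — that $G$ has infinitely many deficient vertices — is the heart of the argument, and the only place the hypothesis that $F$ is c.e.\ is used. The target is a computable set $C$ with $C_0 \subseteq C \subseteq N$ such that only finitely many vertices of $C$ are isolated in the induced subgraph on $C$; since $C \subseteq N$, the complement $V \setminus C \supseteq I$ is then infinite and independent, so $X_0 = V \setminus C \cong \overline{K}_\omega \not\cong G$ and $X_1 = C \not\cong G$ exactly as in the finite-deficiency case. The construction repairs $C_0$: for each deficient $w$, one of its (local-minimum) neighbors $y$ is moved into $C$. This makes $w$ non-isolated in $C$; it creates no new isolated-in-$C$ vertex, since enlarging $C$ can only help; and $y \in N$ and $y$ already has the larger neighbor $w \in C$, so $y$ itself is not isolated in $C$. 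Since every deficient vertex lies in $F$, the c.e.\ enumeration of $F$ provides access to the deficient vertices: one watches $F$, and when $w$ enters $F$ and currently looks deficient (all its discovered neighbors are local minima), one performs the repair for $w$.

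The delicate point, which I expect to be the main obstacle, is to organize these repairs so that the resulting $C$ is genuinely computable rather than merely c.e. A naive implementation makes the set of moved vertices only $\Sigma^0_1$, because the trigger ``$w$ enters $F$'' is unbounded in the stage — and a single local minimum may be the natural neighbor-of-choice for infinitely many deficient vertices — so membership in $C$ is not decidable. Overcoming this requires managing the repairs with a priority-style bookkeeping: assign each deficient-looking $w$ a dedicated neighbor to move, revoke stale assignments when a vertex turns out not to be deficient, and thereby reduce the membership test for $C$ to a bounded search, all while verifying that every deficient vertex is eventually and permanently repaired. Once $C$ is built, checking that neither $X_0$ nor $X_1$ is even classically isomorphic to $G$, hence not computably isomorphic to $G$, is the same short structural argument used in the earlier cases.
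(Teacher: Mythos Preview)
Your proposal has a genuine gap in the main case, and the gap is not a matter of bookkeeping: the target you set for yourself is provably unattainable. You aim to produce a computable $C$ with $C_0 \subseteq C \subseteq N$ so that $X_0 = V\setminus C \cong \overline{K}_\omega$ and $X_1 = C$ has only finitely many isolated vertices, concluding that neither side is even \emph{classically} isomorphic to $G$. But the paper's Theorem~\ref{thm:diagonalize} builds a computable copy of $K_{<\omega}^{\infty}$ for which \emph{every} computable partition has one side classically isomorphic to $G$. That graph satisfies all the hypotheses of Theorem~\ref{thm:REC} (it has infinitely many isolated vertices, is not $\overline{K}_\omega$, and every vertex has finite degree so $F=V$ is trivially c.e.), and in that copy the set $N$ is not computable, so it lands squarely in your ``hard case.'' Hence no computable partition of it can have both halves classically non-isomorphic to $G$; your construction of $C$ must fail there.

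Concretely, in any copy of $K_{<\omega}^{\infty}$ the deficient vertices are exactly the top elements of the $K_2$ components, and each such $v_1$ has a \emph{unique} neighbor $v_0\in M$. Your repair step is forced to move $v_0$ into $C$, and doing this for every $K_2$ yields $C=N$, which is not computable in the bad copy. The ``priority-style bookkeeping'' you allude to cannot rescue this, because there is no freedom in the choice of repair neighbor. (Incidentally, your claimed equivalence ``deficient $\Leftrightarrow$ no larger neighbor'' is only an implication; a vertex can have all neighbors smaller yet have one of them in $C_0$.)

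The paper's proof takes a fundamentally different route: rather than seeking a structural invariant that distinguishes $X_0,X_1$ from $G$, it runs a finite-injury priority construction that directly diagonalizes against each potential computable isomorphism $\Phi_e$. For each $e$ it tries to force $\Phi_e$ to send an isolated vertex of $G$ to a non-isolated vertex of $X_{i_e}$ or vice versa, using the c.e.\ enumeration of $F$ to decide, for a target $y=\Phi_e(x_e)$, whether to commit $y$'s future neighbors to $X_{1-i_e}$ (making $y$ isolated in $X_{i_e}$) or to promise one neighbor into $X_{i_e}$ (making $y$ non-isolated). The outcome only blocks \emph{computable} isomorphisms, which is all the theorem claims and, by Theorem~\ref{thm:diagonalize}, all one can hope for.
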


The corollary follows from Theorem \ref{thm:REC} and Proposition \ref{prop:duality}.

\begin{cor}
Let $G$ be a computable graph that has universal vertices but is not isomorphic to $K_{\omega}$. If the set of vertices with cofinite 
degree is c.e., then there is a computable partition $G = X_0 \sqcup X_1$ such that neither $X_0$ nor $X_1$ is computably isomorphic to $G$.
\end{cor}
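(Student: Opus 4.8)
My plan for the corollary is to derive it from Theorem~\ref{thm:REC} by passing to complements, exactly as in Proposition~\ref{prop:duality}; this is the only genuinely new step, and it is routine. Let $G$ be as in the corollary, which we may assume is infinite. Then $\overline{G}$ is again a computable graph, the universal vertices of $G$ are precisely the isolated vertices of $\overline{G}$, and $G\cong K_\omega$ if and only if $\overline{G}\cong\overline{K}_\omega$; so $\overline{G}$ has isolated vertices and is not isomorphic to $\overline{K}_\omega$. Moreover, since $G$ is infinite, a vertex has cofinite degree in $G$ if and only if it has finite degree in $\overline{G}$, so the set of finite-degree vertices of $\overline{G}$ equals the (c.e.) set of cofinite-degree vertices of $G$. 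Thus $\overline{G}$ satisfies the hypotheses of Theorem~\ref{thm:REC}, and one obtains a computable partition $\overline{G}=X_0\sqcup X_1$ with neither piece computably isomorphic to $\overline{G}$. Viewing $X_0,X_1$ as a partition of $G$, the $G$-induced subgraph on $X_i$ is the complement of the $\overline{G}$-induced subgraph on $X_i$, and (as in Proposition~\ref{prop:duality}) a computable bijection witnesses an isomorphism onto $G$ if and only if it witnesses one onto $\overline{G}$ between the complemented graphs; so neither piece is computably isomorphic to $G$, as required.

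So the real content is Theorem~\ref{thm:REC}, and that is where I would concentrate. Write $I$ for the (co-c.e.) set of isolated vertices of $G$, $N$ for the (c.e.) set of non-isolated vertices, and $F$ for the set of finite-degree vertices, which is c.e.\ by hypothesis; since a vertex has finite degree exactly when it is isolated or of finite positive degree, $I=F\setminus N$. First I would dispose of the cases in which a finite modification of the classical partition of Theorem~\ref{thm:Cameron} is already computable. If $I$ is finite or $N$ is finite, then one of $I,N$ is finite and the other cofinite, hence both are computable, and the partition $(I,N)$ works: $I$ induces $\overline{K}_{|I|}\not\cong G$, and $N$ has no isolated vertices, so $N\not\cong G$. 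If $I$ and $N$ are both infinite but $N\cap F$ is finite, then $I=F\setminus(N\cap F)$ differs from $F$ by a finite set, so $\overline{F}=N\setminus(N\cap F)$ differs from the c.e.\ set $N$ by a finite set and is therefore c.e.; hence $F$, and with it $I$, is computable, and $(I,N)$ again works. This leaves the case that $I$, $N$, and $N\cap F$ are all infinite, and one fixes a computable enumeration $\sigma_0,\sigma_1,\dots$ of the ``special'' vertices in $N\cap F$, i.e.\ non-isolated vertices \emph{known} to have finite degree.

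In this remaining case I would run a stagewise construction with a priority ordering, producing a computable partition $G=X_0\sqcup X_1$. The aim for $X_0$ is to be an infinite independent set, so that its induced subgraph is $\overline{K}_\omega\not\cong G$: every isolated vertex of $G$ can be placed in $X_0$ without creating an edge inside $X_0$, which keeps $X_0$ infinite, and edge-conflicts among candidates for $X_0$ are resolved in favour of older vertices, which keeps $X_0$ independent. Against this, for each $e$ a requirement $R_e$ works to prevent $\Phi_e$ from being an isomorphism of $X_1:=G\setminus X_0$ onto $G$: it is assigned a fresh special vertex $\sigma$, puts $\sigma$ into $X_1$, waits for $\Phi_e(\sigma)$ to converge to some $w$, and then manipulates $\sigma$'s finite neighbourhood to force a mismatch between the neighbourhood of $\sigma$ in $X_1$ and that of $w$ in $G$---roughly, keeping a neighbour of $\sigma$ in $X_1$ so long as $w$ still looks isolated in $G$, and pushing $\sigma$'s neighbours toward $X_0$ once $w$ reveals a neighbour. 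The hard part---and the reason this is only a partial result---is coordinating these moves: a vertex committed to $X_0$ must be committed permanently, or $X_0$ becomes merely $\Pi^0_1$; yet knowing a follower lies in $F$ gives no bound on its degree, so one never learns that its neighbourhood has been fully exposed. This forces a delicate wait-and-see bookkeeping and injury analysis, and the analogous difficulty for vertices of infinite degree, which cannot be recognised at all absent the hypothesis on $F$, is exactly what leaves the unrestricted question for $\mathsf{REC}$ open.
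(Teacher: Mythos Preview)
Your first paragraph correctly derives the corollary from Theorem~\ref{thm:REC} by passing to $\overline{G}$, and this is exactly the paper's proof (stated in one line as ``follows from Theorem~\ref{thm:REC} and Proposition~\ref{prop:duality}''). The remaining paragraphs are extraneous to the corollary, since Theorem~\ref{thm:REC} is established separately and may simply be cited.

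That said, your sketch of Theorem~\ref{thm:REC} diverges substantially from the paper's argument. You propose to force $X_0$ to be an infinite independent set (hence $\cong\overline{K}_\omega$) and then diagonalize only against maps $X_1\to G$ using non-isolated finite-degree followers. The paper does not commit either piece to a fixed isomorphism type. Instead, for each $\Phi_e$ (viewed as a potential isomorphism $G\to X_{i_e}$) it picks a currently-isolated vertex $x_e$, waits for $y=\Phi_e(x_e)$, and then \emph{controls} whether $y$ is isolated in $X_{i_e}$: if $y$ has finite degree, route its future neighbours to $X_{1-i_e}$ and guess ``$x_e$ isolated''; if $y$ acquires an uncommitted neighbour, route that neighbour to $X_{i_e}$ and guess ``$x_e$ not isolated''. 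The guesses are recorded as an initial segment $\sigma_e$ of a putative characteristic function for the isolated set; since that set is not computable, some guess must eventually be wrong, and that wrong guess is the diagonalization. Your approach faces an obstacle you partly flag: making a follower $\sigma$ isolated in $X_1$ requires sending all of $\sigma$'s neighbours to $X_0$, but those neighbours may already be adjacent to vertices in $X_0$, destroying independence, and this decision must be made before you learn whether $w=\Phi_e(\sigma)$ is isolated. The paper's strategy sidesteps this tension precisely by not demanding any structural property of either $X_i$.
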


We now give the proof of Theorem \ref{thm:REC}.

\begin{proof}
Without loss of generality, assume the set of vertices of $G$ is $\omega$. 
If the set of isolated nodes is computable, then let $X_0$ be the set of isolated nodes and $X_1 = G \setminus X_0$ as in the proof of 
Theorem \ref{thm:Cameron}. In this case, neither $X_0$ nor $X_1$ is even classically isomorphic to $G$. 
Therefore, assume the set of isolated nodes is not computable, and so in particular, is infinite. 

The construction proceeds in stages with $G_s$ denoting the subgraph of $G$ on $\{ 0, \ldots, s\}$. At stage $s$, we determine whether to put $s$ in $X_0$ 
or $X_1$. Following the usual use conventions, if $\Phi_{e,s}(x) = y$, then $x,y < s$, so $x \in G_s$ and $y$ has already been placed in either 
$X_0$ or $X_1$. 

For each $e \in \omega$, we need the partition to satisfy the following requirement.  
\[
R_e: \Phi_e \text{ is not an isomorphism from } G \text{ to } X_0 \text{ or } X_1. 
\]
The strategy for $R_e$ keeps five parameters: numbers $i_e$ and $x_e$, finite sets $D_e$ and $S_e$, and a binary string $\sigma_e$. 
The parameter $i_e$ is defined when $\Phi_e(0)$ converges and is set such that $\Phi_e(0) \in X_{i_e}$, indicating we must work to prevent $\Phi_e$ from 
being an isomorphism onto $X_{i_e}$. Unlike the other parameters, $i_e$ does not change values once it is defined.

The goal is to make $\Phi_e$ map an isolated node in $G$ to a non-isolated node in $X_{i_e}$ or vice versa. 
The parameter $x_e$ marks the isolated node we are currently working with. As the value of $x_e$ grows, we attempt to compute the set of isolated nodes in 
$G$, defining (and later extending) $\sigma_e$ to be an initial segment of this computable function. Eventually, because the set of isolated vertices is not 
computable, $\sigma_e$ must be wrong about some vertex, and this incorrect vertex will be a    
diagonalizing value for $\Phi_e$. The sets $D_e$ and $S_e$ contain nodes related to commitments $R_e$ makes about putting future vertices into 
$X_{i_e}$ or $X_{1-i_e}$ as we try to make specific nodes in $X_{i_e}$ isolated or not. 

The construction for a single $R_e$ works as follows. Suppose at stage $s_0$ we define $i_e$ such that $\Phi_e(0) \in X_{i_e}$. We set $x_e$ to be the least 
vertex that currently looks isolated in $G$. Since $G_{s_0}$ might not contain any isolated vertices, we may need to look at vertices in $G_t$ for $t \geq s_0$ to find 
a vertex that is isolated in $G_t$. At this point, we know the vertices $v < x_e$ are not isolated in $G$, but we are unsure whether $x_e$ will be 
isolated or not. We record this information by defining $\sigma_e$ with $|\sigma_e| = x_e$ and $\sigma_e(v) = 0$ for $v <x_e$. 

We do nothing more until $\Phi_e(x_e) $ converges. Assume $\Phi_e(x_e) = y  \in X_{i_e}$ else we win $R_e$ trivially. 
Our goal is to use $y$ to guess whether $x_e$ will be isolated in $G$ in such a way that if our guess is wrong, $R_e$ will be met. Since we are defining 
$X_0$ and $X_1$, we have some control over whether $y$ will be isolated in $X_{i_e}$. 

We split into three substrategies. First, if $y$ already has a neighbor in $X_{i_e}$, then we know $y$ is not isolated in $X_{i_e}$. We declare that 
$x_e$ will not be isolated in $G$ by setting $\sigma_e(x_e) = 0$. If $\sigma_e$ turns out to be wrong about $x_e$, then we win 
$R_e$ because $\Phi_e(x_e) = y$ with $x_e$ isolated in $G$ and $y$ not isolated in $X_{i_e}$. 

Assume $y$ does not currently have a neighbor in $X_{i_e}$. To determine whether to follow the second or third substrategy, we use the hypothesis that the nodes of finite 
degree form a c.e.~set. In parallel, we enumerate the vertices of finite degree searching for $y$, and we look ahead in $G$ to see if $y$ gains a future neighbor which is not yet 
promised to be put in $X_0$ or $X_1$. At least one of these searches must succeed as $R_e$ (and later, even higher priority requirements) will only have 
made finitely many future commitments. The search that terminates first determines which substrategy we follow. 

If we see $y$ enumerated in the set of vertices with finite degree, we promise to put all of $y$'s future neighbors into $X_{1-i_e}$ to make $y$ isolated in $X_{i_e}$. 
To keep track of this commitment, we place $y$ in $D_e$. We declare $x_e$ will be 
isolated by setting $\sigma_e(x_e) = 1$. As long as we keep our promise, if $\sigma_e$ turns out to be wrong about $x_e$, then 
we win $R_e$ because $\Phi_e$ maps a non-isolated node $x_e$ in $G$ to an isolated node $y$ in $X_{i_e}$. 

If we find a future uncommitted neighbor $v$ of $y$, we promise to put $v$ into $X_{i_e}$ at stage $v$ and we mark this commitment by 
putting $v$ into $S_e$. We declare $x_e$ will not be isolated by setting $\sigma_e(x_e) = 0$. As long as we keep our 
promise to put $v$ in $X_{i_e}$, $y$ will not be isolated in $X_{i_e}$. Therefore, again, we win $R_e$ if $\sigma_e$ is incorrect about $x_e$. 

Once we have defined $\sigma_e(x_e)$, we repeat the process above. We define $x_{e,s}$ to be the next largest number that currently 
looks isolated, set $\sigma_e(v) = 0$ for $x_{e,s-1} < v < x_{e,s}$, wait for $\Phi_e(x_e)$ to converge, and employ the appropriate substrategy to define $\sigma_e$ on the new 
value of $x_e$. This process cannot repeat infinitely 
because the set of isolated nodes is not computable. Therefore, we must eventually see a true diagonalization that satisfies $R_e$.

The strategies for different $R$ requirements interact in a standard finite injury way with the priority determined by the index on $R_e$. 
If more than one strategy has an opinion about whether to place the vertex $s$ into $X_0$ or $X_1$ 
at stage $s$, we follow the higher priority strategy and initialize the lower priority one. 

One feature to note is that when $x_e$ is defined at stage $s$, it is currently isolated in $G_s$ (or possibly in $G_t$ for some $t >s$). 
By the time $\Phi_e(x_e)$ converges, $x_e$ may not longer be isolated. However, that doesn't make any difference for our 
strategies. We use the definitions of $X_0$ and $X_1$ to force graph theoretic behavior on the image side with no regard to whether the vertex $x_e$ has remained 
isolated after the stage at which the parameter is assigned. 

We give the formal construction. A vertex $v$ is \textit{claimed by} $R_e$ if $v \in S_e$ or $v$ is connected to a vertex in $D_e$. When $R_e$ is 
\textit{initialized}, its parameters $i_e$, $x_e$ and $\sigma_e$ are undefined and the sets $D_e$ and $S_e$ are set to $\emptyset$. 
$R_e$ \textit{looks satisfied at stage} $s$ if any of the following are true.
\begin{itemize}
\item[(S1)] $\Phi_{e,s}(0)$ diverges. 
\item[(S2)] $(\exists a \neq b < s) \, \big(\Phi_{e,s}(a) \! \! \downarrow = \Phi_{e,s}(b) \! \! \downarrow \vee (\Phi_{e,s}(a) \in X_0 \wedge \Phi_{e,s}(b) \in X_1) \big)$. 
\item[(S3)] $x_e$ is defined but $\Phi_{e,s}(x_e)$ diverges. 
\item[(S4)] $(\exists x <s) (\exists y \in D_e) \, \big( \Phi_{e,s}(x) = y \wedge x \text{ is not isolated} \big)$. 
\item[(S5)] $(\exists x < s) (\exists y,z) \, \big( x \text{ is isolated } \wedge \Phi_{e,s}(x)=y\wedge E(y,z) \wedge z \in S_e \big)$. 
\end{itemize}

At stage $0$, initialize all requirements and put $0$ into $X_0$. 
At stage $s>0$, let each $R_e$ with $e < s$ act in order as described in the $R_e$ module below. When these requirements are done acting, 
check if there is an $e < s$ such that $s$ is claimed by $R_e$. If not, put $s$ into $X_0$. If so, let $e$ be the least such index. If 
$s \in S_e$, put $s$ into $X_{i_e}$ and otherwise put $s$ into $X_{1-i_e}$. Initialize all $R_i$ with $i>e$ and end the stage. 
For the $R_e$ module, act in the first case below that applies.

\textit{Case 1. $R_e$ looks satisfied at $s$.} Do nothing and go to the next requirement. 

\textit{Case 2. $i_e$ is not defined.} Since $R_e$ does not look satisfied, $\Phi_{e,s}(0)$ must converge. By use conventions, $\Phi_{e,s}(0) < s$, so $\Phi_e(0)$ is already  
in $X_0$ or $X_1$. Set $i_e$ such that $\Phi_e(0) \in X_{i_e}$ and go to the next requirement. 

\textit{Case 3. $x_e$ is not defined.} In this case, $\sigma_e$ is also undefined. Set $x_{e,s}$ to be the least vertex that is isolated in some $G_t$ for $t \geq s$. 
Define $\sigma_e$ with $|\sigma_e| = x_e$ and $\sigma_e(v) = 0$ for all $v < x_e$. Go to the next requirement. 

\textit{Case 4. $x_e$ is defined.} Since $x_e$ is defined and $R_e$ does not look satisfied, $\Phi_{e,s}(x_e)$ must converge. Let $y_e = \Phi_e(x_e)$ and note that 
$y_e \in X_{i_e}$. 
\begin{itemize}
\item[(4.1)] If $y_e$ has a neighbor in $X_{i_e}$, define $x_{e,s}$ and $\sigma_{e,s}$ as described after (4.2). 
\item[(4.2)] Otherwise, dovetail the enumerations of the finite degree vertices and of the neighbors of $y_e$ until one of (4.2.1) or (4.2.2) occurs.   
\begin{enumerate}
\item[(4.2.1)] $y_e$ is enumerated as a vertex with finite degree. 
\item[(4.2.2)] $y_e$ gets a new neighbor that is not protected by a requirement $R_i$ with $i < e$. 
\end{enumerate}
If (4.2.1) halts first, put $y_e$ into $D_e$. If (4.2.2) halts first, put the neighbor into $S_e$. In either case, define $x_{e,s}$ and $\sigma_{e,s}$ as described below. 
\end{itemize}
Set $x_{e,s}$ to be the least vertex $v > x_{e,s-1}$ that is isolated in $G_t$ for some $t \geq s$. Define $\sigma_{e,s}$ to be an extension of $\sigma_{e,s-1}$ of length  
$x_{e,s}$. If we acted in (4.2.1), set $\sigma_{e,s}(x_{e,s-1}) = 1$, and if we acted in (4.1) or (4.2.2), set $\sigma_{e,s}(x_{e,s-1}) = 0$. In either case, 
set $\sigma_{e,s}(v) = 0$ for $x_{e,s-1} < v < x_{e,s}$. 

This completes the construction. We note some properties that are clear by inspection and that we use implicitly below. 
First, for any index $e$ and stage $s$, $x_e$ is defined if and only if $\sigma_e$ is defined. 
Second, if $y \in D_e$, then $y$ has finite degree in $G$. Moreover, if $y$ is placed in $D_e$ at stage $s$, then $y$ is isolated in $G_s$. Third, at any stage $s$, each 
requirement $R_i$ has claimed only finitely many vertices. Therefore, in (4.2), if $y_e$ has infinite degree, $R_e$ will eventually see a vertex connected to $y_e$ that is not 
claimed by any $R_i$ with $i < e$. 

\begin{lem}
\label{lem:notiso}
Consider an index $e$ and a stage $s$ such that $\sigma_{e,s}$ is defined. Let $t < s$ be the last stage at which $R_e$ was initialized. For a vertex 
$v < |\sigma_{e,s}|$, let $t_v > t$ be the least stage such that $|\sigma_{e,t_v}| > v$. 
If $v \neq x_{e,t_v-1}$, then $v$ is not isolated in $G$ and $\sigma_{e,t_v}(v) = 0$.  
\end{lem}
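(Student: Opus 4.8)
The plan is to prove the lemma by tracing exactly how the string $\sigma_e$ is built up by the $R_e$ module, with a case split on what $R_e$ does at stage $t_v$. First I would record two bookkeeping facts that are immediate from the module description: whenever $\sigma_{e,r}$ is defined one has $|\sigma_{e,r}| = x_{e,r}$ (this equality is set up in Case 3 and preserved by the extensions in Case 4), and between two successive initializations of $R_e$ the string $\sigma_e$ is only ever lengthened, never altered below its current length. Since $\sigma_e$ is not re-initialized between stages $t$ and $s$, every position below $|\sigma_{e,s}|$ thus has a stable value, assigned at one particular stage. By the minimality of $t_v$, for every stage $r$ with $t < r < t_v$ either $\sigma_{e,r}$ is undefined or $|\sigma_{e,r}| \leq v$, while $|\sigma_{e,t_v}| > v$; hence position $v$ is assigned its value precisely at stage $t_v$, and $R_e$ acts at $t_v$ via Case 3 or Case 4 (Cases 1 and 2 leave $|\sigma_e|$ unchanged, so cannot push it past $v$).

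Next I would handle the two cases. If $R_e$ acts in Case 3 at stage $t_v$, then $x_e$ and $\sigma_e$ were undefined at stage $t_v-1$, so the hypothesis $v \neq x_{e,t_v-1}$ holds vacuously; the module sets $|\sigma_{e,t_v}| = x_{e,t_v}$ and $\sigma_{e,t_v}(w) = 0$ for all $w < x_{e,t_v}$, and since $v < |\sigma_{e,t_v}| = x_{e,t_v}$ we get $\sigma_{e,t_v}(v) = 0$ at once. If $R_e$ acts in Case 4, then $x_{e,t_v-1} = |\sigma_{e,t_v-1}| \leq v < |\sigma_{e,t_v}| = x_{e,t_v}$, and the hypothesis $v \neq x_{e,t_v-1}$ sharpens this to $x_{e,t_v-1} < v < x_{e,t_v}$; the module explicitly sets $\sigma_{e,t_v}(w) = 0$ for every $w$ strictly between $x_{e,t_v-1}$ and $x_{e,t_v}$, so again $\sigma_{e,t_v}(v) = 0$. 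The only position at which the module ever writes the bit $1$ is $x_{e,t_v-1}$, which is exactly the position excluded by the hypothesis; this is precisely the point of the lemma.

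Finally I would show $v$ is not isolated in $G$, which is the step I expect to require the most care. Recall that $x_{e,t_v}$ is chosen, in Case 3, as a least vertex seen isolated in some $G_r$ with $r \geq t_v$, and, in Case 4, as a least such vertex that is moreover above $x_{e,t_v-1}$. In both cases $v < x_{e,t_v}$, and in Case 4 also $v > x_{e,t_v-1}$. An isolated vertex of $G$ is isolated in $G_r$ for every $r$; so if $v$ were isolated in $G$, then $v$ would itself satisfy the defining property of $x_{e,t_v}$, while being strictly smaller (and, in Case 4, still above $x_{e,t_v-1}$), contradicting the minimality in the choice of $x_{e,t_v}$. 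Hence $v$ is not isolated in $G$. The delicate points here are reading off from the module the exact form of the minimization used to set $x_{e,t_v}$ — in particular that it is a genuine least and that $v$ genuinely competes in it — after which the contradiction is immediate; everything else is direct bookkeeping against the module description.
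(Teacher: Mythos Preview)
Your proposal is correct and follows essentially the same approach as the paper's proof: split on whether $R_e$ acts in Case 3 or Case 4 at stage $t_v$, read off $\sigma_{e,t_v}(v)=0$ directly from the module, and use the minimality in the definition of $x_{e,t_v}$ to conclude $v$ is not isolated. The only cosmetic difference is that the paper fixes the specific witness stage $t\geq t_v$ at which $x_{e,t_v}$ is isolated in $G_t$ and argues $v$ is not isolated in that particular $G_t$, whereas you argue the contrapositive (if $v$ were isolated in $G$ it would compete in and win the minimization); these are equivalent.
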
 

\begin{proof}
By the hypotheses, at stage $t_v-1$, either $\sigma_e$ is undefined or $|\sigma_{e,t_v-1}| \leq v$. In the former case, $R_e$ acts in Case 3 at $t_v$ to define 
$\sigma_{e,t_v}$, and in the latter case, $R_e$ acts in Case 4 to extend $\sigma_{e,t_v-1}$ to $\sigma_{e,t_v}$. The arguments in each case are essentially the same, so we 
assume that $R_e$ acts in Case 4. 

The parameter $x_{e,t_v}$ is defined to be the least vertex greater than $x_{e,t_v-1}$ that is isolated in $G_t$ for some $t \geq t_v$. Fix the stage $t \geq t_v$ such that 
$x_{e,t_v}$ is isolated in $G_t$. Since $v < x_{e,t_v}$ was not chosen as the value of the parameter, it follows that $v$ is not isolated in $G_t$ and hence is not isolated in 
$G$. Furthermore, since $x_{e,t_v-1} < v< x_{e,t_v}$, we set $\sigma_{e,t_v}(v) = 0$.  
\end{proof}

\begin{lem}
\label{lem:P12}
For each $e$, the following properties hold. 
\begin{enumerate}
\item[(P1)] $R_e$ is initialized only finitely often. 
\item[(P2)] There is a stage $t$ such that for all $s \geq t$, $R_e$ looks satisfied at $s$. 
\end{enumerate}
\end{lem}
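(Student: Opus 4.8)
The plan is to prove (P1) and (P2) by simultaneous induction on $e$, so I will describe how to treat $R_e$ assuming both hold for all $R_i$ with $i < e$.

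First I would use the induction hypothesis to fix a stage $t^\ast$ after which no $R_i$ with $i < e$ is initialized and each such $R_i$ permanently looks satisfied; by Case~1 such an $R_i$ then never acts again, so its parameters $D_i$ and $S_i$ are frozen from $t^\ast$ on. Since every vertex ever placed in some $D_i$ has finite degree in $G$, the set $\bigcup_{i<e}\big(S_i \cup \{v : (\exists y \in D_i)\,E(v,y)\}\big)$ of vertices ever claimed by a requirement of higher priority than $R_e$ is finite; fix a bound $N$ exceeding all of them. Then for any stage $s > \max(t^\ast, N)$ the vertex $s$ is not claimed by any $R_i$ with $i < e$, so the least index claiming $s$ (if one exists) is at least $e$, and $R_e$ is not initialized at stage $s$. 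Together with the fact that $R_e$ is also initialized at stage $0$, this gives (P1); in particular $R_e$ is initialized only finitely often, so after its final initialization the string $\sigma_{e,s}$ is nondecreasing in length and never alters an already-defined value.

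For (P2) I would let $t_3 > \max(t^\ast, N)$ be the last stage at which $R_e$ is initialized and split on the behavior of $\Phi_e$. If $\Phi_e(0)\!\uparrow$, then (S1) holds at every stage. If (S2) holds at some stage $\ge t_3$, then it holds at all later stages, since the witnessing computations and the placements into $X_0$, $X_1$ are permanent. Otherwise $R_e$ acts in Case~2 to set $i_e$ with $\Phi_e(0) \in X_{i_e}$, then in Case~3 to define $x_e$ and $\sigma_e$ (possible because the isolated set is infinite), and thereafter in Case~4 whenever it acts; and if at some point $\Phi_e(x_e)\!\uparrow$ with $x_e$ defined, then (S3) holds permanently. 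So the crux is to rule out that none of (S1)--(S5) ever holds permanently, i.e.\ that $R_e$ acts in Case~4 cofinally. Assuming this, the parameter $x_e$ is redefined infinitely often, so $\sigma_e := \lim_s \sigma_{e,s}$ is a total function, and it is computable because $R_e$ is initialized only finitely often. I would then argue that $\sigma_e$ is the characteristic function of the isolated set of $G$, contradicting the standing hypothesis that this set is not computable: Lemma~\ref{lem:notiso} handles every $v$ that is never a value of $x_e$ (there $\sigma_e(v) = 0$ and $v$ is not isolated); if a value $x^{(k)}$ of $x_e$ is handled in sub-case~(4.2.1) then $\Phi_e(x^{(k)}) \in D_e$, so a neighbor of $x^{(k)}$ would make (S4) permanent, forcing $x^{(k)}$ to be isolated in agreement with $\sigma_e(x^{(k)})=1$; if it is handled in sub-case~(4.2.2) then the neighbor $v$ of $y = \Phi_e(x^{(k)})$ put into $S_e$ was chosen unprotected by higher priority, so (as no higher-priority requirement claims it after $t^\ast$) the commitment to put $v$ into $X_{i_e}$ is kept, $y$ is not isolated in $X_{i_e}$, and were $x^{(k)}$ isolated (S5) would be permanent, forcing $x^{(k)}$ to be non-isolated in agreement with $\sigma_e(x^{(k)})=0$; and if it is handled in sub-case~(4.1) then $y = \Phi_e(x^{(k)})$ has a permanent neighbor in $X_{i_e}$, and one must check that $\sigma_e(x^{(k)}) = 0$ is still correct.

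The last point --- sub-case~(4.1) --- is where I expect the real difficulty. A wrong guess there does not immediately trigger one of (S1)--(S5), so to finish the argument one has to show that the Case~4 loop cannot go on forever while such an error stands: if $x^{(k)}$ were isolated, $\Phi_e$ would send an isolated vertex to a non-isolated one, and one needs to trace how $y$ came to have a neighbor in $X_{i_e}$ (in particular whether that neighbor lies in $S_e$, which would make (S5) permanent) to derive a contradiction with the loop continuing. Alongside this, the other thing that needs care is verifying that $R_e$'s own commitments --- putting the future neighbors of vertices in $D_e$ into $X_{1-i_e}$ and putting the elements of $S_e$ into $X_{i_e}$ --- are actually honored, which is precisely where the settling stage $t^\ast$ and the finite degree of the $D_i$-vertices supplied by the induction hypothesis are used.
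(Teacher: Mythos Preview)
Your outline is essentially the paper's proof: simultaneous induction on $e$, with (P1) obtained from the eventual freezing of the higher-priority $D_j$ and $S_j$ together with the finite degree of every vertex ever placed in a $D_j$, and (P2) obtained by contradiction, arguing that if $R_e$ acts in Case~4 infinitely often after its last initialization then $\lim_s \sigma_{e,s}$ is a total computable function equal to the characteristic function of the isolated set. The case split you describe---Lemma~\ref{lem:notiso} for vertices never taken as $x_e$, condition (S4) for sub-case~(4.2.1), condition (S5) for sub-case~(4.2.2)---is exactly what the paper does.

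Your hesitation about sub-case~(4.1) is well placed, and you are not missing a hidden argument: the paper handles (4.1) and (4.2.2) together, concluding in both cases that $y_k$ is not isolated in $X_{i_e}$ and then writing ``It now follows by (S5) that $z_k$ must eventually get a neighbor in $G$.'' But (S5) as written requires the neighbor $z$ of $y$ to lie in $S_e$, and in (4.1) the existing neighbor of $y_k$ in $X_{i_e}$ need not belong to $S_e$. The natural repair is to weaken the clause $z \in S_e$ in (S5) to $z \in S_e \cup X_{i_e}$ (or simply $z \in X_{i_e}$); this makes the (4.1) case immediate, since the already-present neighbor of $y_k$ in $X_{i_e}$ then witnesses (S5) whenever $z_k$ remains isolated, and it does not disturb the post-lemma verification that $R_e$ is actually satisfied (if $z \in X_{i_e}$ outright, then $y$ is already non-isolated in $X_{i_e}$). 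So rather than tracing the provenance of the neighbor as you propose, you can simply adopt this broadened (S5) and the argument closes uniformly.
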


\begin{proof}
We prove the properties simultaneously by induction on $e$. First, consider (P1). This property holds trivially for $R_0$. For $e>0$, 
fix a stage $t$ such that for all $j < e$, $R_j$ is never initialized after $t$ and $R_j$ looks satisfied at $s$ for all $s \geq t$. By construction, the parameters for 
$R_j$ do not change after $t$. Since each $y \in D_j$ has finite degree, 
the sets $D_j$ and $S_j$ can cause a vertex $s$ to be put into $X_{i_j}$ or $X_{1-i_j}$ only finitely often after stage $t$. In particular, each $R_j$ can only initialize 
$R_e$ finitely often and so (P1) holds for $e$. 

For (P2), fix $e$. Let $t$ be the last stage at which $R_e$ is initialized. Suppose for a contradiction, there is no stage $t$ as in (P2). By (S1)-(S3), we must have that  
$\Phi_e(0)$ converges, $\Phi_e$ is 1-to-1 and maps into $X_{i_e}$, and $\Phi_e(x_e)$ converges for each value of the parameter $x_e$ after stage $t$. 
It follows that, after stage $t$, $x_e$ takes on an infinite sequence of values $z_0 < z_1 < \ldots$. 

Let $s_k$ be the stage at which $R_e$ acts in Case 4 to set $x_{e,s_k} = z_k$ and define $\sigma_{e,s_k}$ with length $x_{e,s_k}$. By construction, the sequences 
$\sigma_{e,s_k}$ are nested and uniformly computable, so $g_e = \cup_k \sigma_{e,s_k}$ is a computable function. To finish the proof, it suffices to show that $g_e$ is the 
characteristic function for the set of isolated nodes as this provides the desired contradiction. 

By Lemma \ref{lem:notiso}, if $v \neq z_k$ for all $k$, then $v$ is not isolated in $G$ and $g_e(v) = \sigma_{e,s_\ell}(v) = 0$, where $\ell$ is least such that 
$v < z_\ell$. Therefore, $g_e$ is correct on all nodes not of the form $z_k$. 

The value of $g_e(z_k)$ is set at stage $s_{k+1}$ when $R_e$ sees $\Phi_{e,s_{k+1}}(z_k)$ converge and defines $\sigma_{e,s_{k+1}}(z_k)$ in 
Case 4. Let $y_k = \Phi_e(z_k)$ and note that by (S2), $y_e \in X_{i_e}$. 
At stage $s_{k+1}$, $R_e$ either acts in (4.1), puts a neighbor $v_k$ of $y_k$ into $S_e$ via (4.2.2), or puts $y_k$ into $D_e$ via (4.2.1).

Suppose $R_e$ acts in (4.1) or puts $v_k$ in $S_e$ in (4.2.2). By construction, $g_e(z_k) = \sigma_{e,s_{k+1}}(z_k) = 0$, so we need to show $z_k$ is not 
isolated in $G$. We claim that $y_k$ will not be isolated in $X_{i_e}$. If $R_e$ acts in (4.1), then  $y_k$ is already not isolated in $X_{i_e, s_{k+1}}$. 
Otherwise, $R_e$ acts in (4.2.2), and because $R_e$ is not initialized after stage $t$, the vertex $v_k$ remains in $S_e$ until it is placed in 
$X_{i_e}$ at stage $v_k$ making $y_k$ not isolated in $X_{i_e}$. It now follows by 
(S5) that $z_k$ must eventually get a neighbor in $G$.

Finally, suppose $R_e$ puts $y_k$ into $D_e$ via (4.2.1). By construction, $g_e(z_k) = \sigma_{e,s_{k+1}}(z_k) = 1$, so we need to show  
$z_k$ is isolated in $G$. Since $R_e$ is not initialized again, $y_k$ remains in $D_e$ at all future stages. If $z_k$ were to get a neighbor at a future stage $s'$, then 
(S4) would be true at every $s \geq s'$, contradicting the assumption that there is no stage $t$ as in (P2). 
\end{proof}

To complete the proof of Theorem \ref{thm:REC}, we show each $R_e$ is satisfied. Fix $e$, let $t_0$ be the last stage at which $R_e$ is initialized and let $t_1 \geq t_0$ be the stage 
from (P2). 
If $R_e$ looks satisfied for all $s \geq t_1$ because of (S1), (S2) or (S3), then $R_e$ is satisfied because $\Phi_e$ is either not total, not one-to-one, or doesn't map into a 
single $X_i$. 

Suppose $R_e$ looks satisfied for all $s \geq t_1$ because of (S4) with $\Phi_{e,t_1}(x) = y \in D_e$. The vertex $y$ must have been placed in $D_e$ after stage $t_0$, so  
$R_e$ is not initialized after $y$ enters $D_e$. It follows that no higher priority requirement overrides the $R_e$ commitment to put $y$'s neighbors into $X_{1-i_e}$. Therefore, 
eventually all of $y$'s (finitely many) neighbors are in $X_{1-i_e}$, so $y$ is, in fact, isolated in $X_{i_e}$. $R_e$ is won because $x$ is not isolated in $G$ but $y$ is 
isolated in $X_{i_e}$. 

Finally, suppose $R_e$ looks satisfied because of (S5) with $\Phi_{e,t_1}(x) = y$ and an edge $E(y,z)$ with $z \in S_e$. As in the previous paragraph, 
$R_e$ keeps its commitment to put $z$ into $X_{i_e}$. Therefore, $y$ is not isolated in $X_{i_e}$, so $R_e$ is won because $x$ is isolated in $G$ and $y$ is not isolated in $X_{i_e}$.
\end{proof}

\section{Induction aspects}
\label{sec:induction}

In the classical proof of Theorem \ref{thm:Cameron}, we used the least number principle $\mathsf{L}\Sigma^0_2$ to conclude that if $G$ is not isomorphic to 
$\mathcal{R}$, then there is a least $n$ such that there exist finite sets $A$ and $B$ with $|A|+|B|=n$ for which the extension axiom $\varphi_{\mathcal{R}}$ 
fails. In this section, we show that the statement asserting the existence of a least such $n$ is equivalent to $\mathsf{L}\Sigma^0_2$. 

Let $G$ be a graph. $\langle X_0, X_1 \rangle$ is an $n$-\textit{extension pair} if $X_0$ and $X_1$ are 
disjoint subsets of $G$ with $|X_0|+|X_1| = n$. The \textit{extension property} holds for $\langle X_0, X_1 \rangle$ in $G$ if there is 
a vertex $v \in G \setminus (X_0 \cup X_1)$ that is connected to every vertex in $X_0$ and to none of the vertices in $X_1$. By definition, $G$ is a random graph if, for all $n$, every 
$n$-extension pair in $G$ has the extension property. 

Keep in mind two properties of extension pairs during the construction. First, 
the only $0$-extension pair is $\langle \emptyset, \emptyset \rangle$, which always has the extension property since $G$ is nonempty. Therefore, if $G \not \cong \mathcal{R}$, 
then the least $n$ for which the extension property fails satisfies $n \geq 1$. 
Second, if $G$ is infinite and has an $m$-extension pair $\langle X_0,X_1 \rangle$ without the extension property, then for every $n \geq m$, we can form  
an $n$-extension pair without  the extension property by adding $n-m$ many vertices from $G \setminus (X_0 \cup X_1)$ to $X_0$.

\begin{thm}
\label{thm:ind}
The following are equivalent over $\mathsf{RCA}_0$.
\begin{enumerate}
\item[(1)] For every graph $G$ not isomorphic to $\mathcal{R}$, there is a least $n$ for which there is an $n$-extension pair $\langle X_0, X_1 \rangle$ 
that fails to have the extension property.
\item[(2)] $\mathsf{L}\Sigma^0_2$.
\end{enumerate}
\end{thm}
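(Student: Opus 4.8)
The plan is to prove the two implications separately; the direction $(2)\Rightarrow(1)$ is routine, while $(1)\Rightarrow(2)$ requires encoding an arbitrary $\Sigma^0_2$ least-number instance into a graph. For $(2)\Rightarrow(1)$, fix a graph $G\not\cong\mathcal{R}$. The statement $\varphi(n)$ that some $n$-extension pair in $G$ fails the extension property is $\Sigma^0_2$: it asserts the existence of a code for a finite disjoint pair $\langle X_0,X_1\rangle$ with $|X_0|+|X_1|=n$ such that every $v\in G\setminus(X_0\cup X_1)$ fails to be a correct witness, the last condition being $\Delta^0_0$. If $G\models\varphi_{\mathcal{R}}$ then $G$ is a random graph and hence isomorphic to $\mathcal{R}$ by Proposition~\ref{prop:Runique}; so $G\not\cong\mathcal{R}$ forces $G\not\models\varphi_{\mathcal{R}}$, that is, $\exists n\,\varphi(n)$, and applying $\mathsf{L}\Sigma^0_2$ to $\varphi$ produces the least such $n$, which is exactly~(1).

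For $(1)\Rightarrow(2)$, fix a $\Sigma^0_2$ formula $\psi(x)\equiv\exists y\,\forall z\,\theta(x,y,z)$ with $\theta\in\Delta^0_0$ (possibly with set parameters) and assume $\exists x\,\psi(x)$. If $\psi(0)$ holds then $0$ is the least witness and there is nothing to prove, so assume $\neg\psi(0)$. I would build, by a recursion formalizable in $\mathsf{RCA}_0$, a graph $G=G_\psi$ on an infinite domain with the property that, for every $n$, \emph{there is a failing $n$-extension pair in $G$ if and only if $(\exists m\le n)\,\psi(m)$.} Granting this, $G_\psi$ is not isomorphic to $\mathcal{R}$ (it has a failing extension pair since $\exists x\,\psi(x)$), so (1) applied to it gives a least $n_0$ admitting a failing $n_0$-pair; then $n_0$ is least with $(\exists m\le n_0)\psi(m)$, and a bounded computation using $\neg\psi(0)$ identifies $n_0$ as the least $x$ with $\psi(x)$. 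As $\psi$ was an arbitrary $\Sigma^0_2$ formula possessing a witness, $\mathsf{L}\Sigma^0_2$ follows.

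The construction of $G_\psi$ interleaves a Fra\"iss\'e-style build of a graph with the full extension property with \emph{obstruction gadgets}. For each pair $(m,y)$ with $m\ge1$ reserve $m$ fresh vertices forming a clique $A^{(m,y)}$, with distinct gadgets using disjoint vertex sets; call $(m,y)$ \emph{defused} at the first stage, if any, at which some $z$ with $\neg\theta(m,y,z)$ is seen, at which point a fresh common neighbor of $A^{(m,y)}$ is added, while before that stage no vertex outside $A^{(m,y)}$ is ever made adjacent to all of $A^{(m,y)}$. Concurrently, list extension requirements $R_{\langle X_0,X_1\rangle}$ over all finite disjoint pairs and serve a requirement by a fresh witness adjacent to exactly $X_0$, \emph{unless} $X_0$ contains the vertex set of a gadget still looking active, in which case the requirement is held off and served later should that gadget become defused; since edges are never deleted, a served requirement keeps its witness permanently. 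The facts to verify are: (a) a never-defused gadget $(m,y)$ never acquires a common neighbor, so $\langle A^{(m,y)},\emptyset\rangle$ is a failing $m$-pair, whence by the upward-closure observation preceding Theorem~\ref{thm:ind} there is a failing $n$-pair for every $n\ge m$ — in particular $(\exists m\le n)\psi(m)$ implies a failing $n$-pair; and (b) a failing $n$-pair $\langle X_0,X_1\rangle$ cannot have been served, and since only finitely many (pairwise disjoint) gadgets have vertex sets contained in the finite $X_0$, one of these must be never defused, giving $m=|A^{(m,y)}|\le n$ with $\psi(m)$.

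The main obstacle will be carrying out this construction and its verification inside $\mathsf{RCA}_0$ \emph{without} invoking $\mathsf{I}\Sigma^0_2$, which is precisely what is being derived. Two points need care. First, distinct gadgets must not interfere: this is arranged by giving them disjoint clique vertex sets and only ever adding edges incident to freshly introduced vertices, so that the invariant ``a never-defused gadget has no common neighbor'' is maintained by induction on stages at the $\Sigma^0_1$ level (the only vertices that could be adjacent to all of such an $A^{(m,y)}$ are witnesses whose $X_0$ contains $A^{(m,y)}$, and those requirements are never served). Second, every requirement that is not permanently held off must eventually be served; establishing this reduces to taking the maximum of the finitely many defusal stages of the gadgets (at most $|X_0|$ of them, being pairwise disjoint) whose vertex sets are subsets of $X_0$, which needs only $\mathsf{I}\Sigma^0_1$. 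The remaining bookkeeping — the equivalence characterizing when a failing $n$-pair exists, and the upward-closure step — is elementary and available with $\Sigma^0_1$-induction.
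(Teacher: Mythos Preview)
Your proposal is correct and follows the same overall plan as the paper: the easy direction is immediate since the formula is $\Sigma^0_2$, and for the hard direction you build a graph in which a failing $n$-extension pair exists exactly when $(\exists m\le n)\,\psi(m)$, then read off the least $\psi$-witness from the least failing size.

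The one substantive difference is in the obstruction bookkeeping. The paper keeps a single moving obstruction $F_n$ for each $n$: it cycles through candidate existential witnesses $x_{n,s}$ for $\psi(n)$, and each time the current candidate fails it \emph{resets} $F_n$ to a fresh $n$-set and simultaneously adds extension witnesses for all $n$-pairs not blocked by some higher-priority $F_m$ with $m<n$. Your version instead plants a \emph{fixed} gadget $A^{(m,y)}$ for every pair $(m,y)$ and defuses it individually when $\exists z\,\neg\theta(m,y,z)$ is discovered, with blocking by containment rather than by priority. Both schemes achieve the same invariant; yours is arguably cleaner to verify (no moving parameters, and the disjointness of gadgets gives the bound ``at most $|X_0|$ blockers'' for free), while the paper's keeps the number of live obstructions at each size down to one and makes the eventual satisfaction of unblocked $n$-pairs automatic by acting on all of them each time $n\in Y_s$. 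Your remark that only edges incident to fresh vertices are ever added is exactly what guarantees a served requirement keeps its witness, and your use of $\mathsf{B}\Sigma^0_1$ to bound the finitely many defusal stages inside a given $X_0$ mirrors the paper's appeals to $\mathsf{B}\Sigma^0_0$ and $\mathsf{L}\Pi^0_1$ in the corresponding verification.
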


\begin{proof}
(2) implies (1) because the formula with free variable $n$ saying ``there is an $n$-extension pair that fails to have the extension property'' is $\Sigma^0_2$. 

For (1) implies (2), fix a $\Sigma^0_2$ formula $\psi(n)$ of the form $\exists x \forall y \phi(n,x,y)$, where $\phi$ is $\Sigma^0_0$, such that $\psi(n)$ holds for some $n$. 
Without loss of generality, we can assume $\neg \psi(0)$ by replacing $\psi(n)$ by $n>0 \wedge \psi(n-1)$ if necessary. In addition, 
it suffices to construct $G$ such that $(\exists m \leq n) \psi(m)$ holds if and only if for some $m \leq n$, 
there is an $m$-extension pair without the extension property.  

We construct $G$ in stages with $G_s$ denoting the graph at stage $s$. 
For each $n \geq 1$, we have a strategy which tries to ensure that $\psi(n)$ holds if and only if the 
extension property fails for a pair $\langle F, \emptyset \rangle$ with $|F|=n$. 
The strategy for $n$ keeps two parameters: $x_n$ and $F_n$. The parameter $x_n$ is the existential witness we are checking in the formula 
$\psi(n)$ and $F_n$ 
is a set of size $n$. As long as $(\forall y \leq s) \phi(n,x_n,y)$ holds, we prevent any node in $G_s$ from connecting to every vertex in $F_n$. However, if 
$(\exists y \leq s) \neg \phi(n,x_n,y)$, then we add new nodes to witness the extension property for every $n$-extension pair in $G_s$ including $\langle F_n, \emptyset \rangle$.   
We increment $x_n$ and add $n$ new elements to form a new set $F_n$. 

This strategy succeeds in isolation. If $\psi(n)$ holds, then $x_n$ eventually reaches a value for which $\forall y \phi(n,x_n,y)$. We choose a final set 
$F_n$ and prohibit any node from connecting to all of $F_n$, making the extension property fail for $\langle F_n, \emptyset \rangle$. 
On the other hand, if $\neg \psi(n)$ holds, then for every value of $x_n$, there is a stage $s$ such that 
$(\exists y \leq s) \neg \phi(n,x_n,y)$, at which point we increase $x_n$ and add witnesses for all $n$-extension pairs in $G_s$. 
Since each $n$-extension pair $\langle X_0, X_1 \rangle$ in $G$ is contained in $G_s$ for large enough $s$, the extension property holds for every $n$-extension pair.

Unfortunately, these strategies interfere with each other. Consider $n_0 < n_1$. When $n_1$ adds witnesses to realize the extension 
property for every $n_1$-extension pair, it adds nodes connected to every point in $F_{n_0}$. To protect $n_0$, we restrict $n_1$ from adding a witness 
for $\langle X_0, X_1 \rangle$ when $F_{n_0} \subseteq X_0$. When $\neg \psi(n_0)$ holds, this restriction has no effect in 
the limit because the parameter 
$F_{n_0}$ never settles on a final set. However, when $\psi(n_0)$ holds, it prevents $n_1$ from realizing the extension property for sets extending the final value of 
$\langle F_{n_0}, \emptyset \rangle$. It also guarantees there will be $n_1$-extension pairs without the extension property, a condition that is necessary 
considering the comments before Theorem \ref{thm:ind}. 

We give the full construction of $G$. For each $n \geq 1$, we keep parameters $x_{n,s} \in \mathbb{N}$ and $F_{n,s}$ with $|F_{n,s}|=n$ defined 
by primitive recursion on $s$. We set default values $x_{n,s} = 0$ and $F_{n,s} = \{ 0, \ldots, n-1 \}$ for $n > s$. 

We define $G_s$ and $m_s$ by primitive recursion on $s$. $G_s = \langle V_s, E_s \rangle$ with vertex set $V_s = \{ 0, \ldots, m_s \}$ and edge  
relation $E_s \subseteq V_s \times V_s$. The values of $m_s$ are strictly increasing and unbounded, so the vertex set of 
$G$ is $\cup_s V_s = \mathbb{N}$. We maintain $E_{s+1} \! \upharpoonright \! V_s = E_s$ so $E_G = \cup_s E_s$ is definable with bounded quantifiers:
\[
E_G(n,m) \text{ holds if and only if } E_{\max \{ n,m \}}(n,m) \text{ holds.}
\]

For $s=0$, set $m_0 = 0$, $V_0 = \{ 0 \}$ and $E_0 = \emptyset$ with the default parameters $x_{1,0} = 0$ and $F_{1,0} = \{ 0 \}$. (This is a throwaway stage because the 
strategies are only for $n \geq 1$.) 

For $s=1$, we set the initial real parameters for the $n=1$ strategy. Set $x_{1,1} = 0$, $m_1 = 1$ (so $V_1 = \{ 0, 1 \}$), $E_1 = \emptyset$ and $F_{1,1} = \{ 1 \}$. 

For $s > 1$, we determine which $n < s$ need to act. Define 
\[
Y_s = \{ n : 1 \leq n < s \text{ and } (\exists y \leq s) \neg \phi(n,x_{n,s-1},y) \}.
\]
If $Y_s = \emptyset$, then we do not act for any $1 \leq n < s$. Set $m_s = m_{s-1}+s$ to add $s$ new elements to $G_s$, but keep $E_s = E_{s-1}$ so there are no  
edges between the new vertices and the nodes in $G_s$. Set $x_{s,s} = 0$ and let $F_{s,s} = \{ m_{s-1}+1, \ldots, m_s \}$ be the set of $s$ many new elements. 
For $1 \leq n < s$, leave the parameters unchanged: $x_{n,s} = x_{n,s-1}$ and $F_{n,s} = F_{n,s-1}$. 

If $Y_s \neq \emptyset$, then we act for each $n \in Y_s$. For $n \in Y_s$, let $k_n$ be the number of $n$-extension pairs $\langle X_0, X_1 \rangle$  in 
$G_{s-1}$ such that there is no $m < n$ with $m \not \in Y_s$ and $F_{m,s-1} \subseteq X_0$. We refer to such a pair as an \textit{active} $n$-extension pair. 

Set $m_s = m_{s-1} + s + \sum_{n \in Y_s} (k_n + n)$. Use the first $\sum_{n \in Y_s} k_n$ new elements as follows. Order the active $n$-extension pairs 
$\langle X_0, X_1 \rangle$ for $n \in Y_s$ and, considering these pairs in order, put edges from the next new element in $G_s$ to the nodes in $X_0$ 
(and to no other nodes). These are the only new edges added to $G_s$. 

If $n \not \in Y_s$, keep $x_{n,s} = x_{n,s-1}$ and $F_{n,s} = F_{n,s-1}$. If $n \in Y_s$, set $x_{n,s}$ to be the 
least $x \leq s$ such that $(\forall y \leq s) \phi(n,x,y)$ holds. If there is no such $x \leq s$, then set $x_{n,s} = s+1$. Use the next $\sum_{n \in Y_s} n$ elements 
to define $F_{n,s}$ for $n \in Y_s$. Consider these $n$ in order, setting $F_{n,s}$ to be the set of the next $n$ many new elements in 
$G_s$. Finally, set $x_{s,s} = 0$ and $F_{s,s}$ to be the set of the remaining $s$ many new elements in $G_s$. This completes the construction at stage $s$.

By $\Sigma^0_0$ induction on $s$, it follows that for all $s$ and $1 \leq n \leq s$, $F_{n,s}$ is a set of size $n$ with $x_{n,s} < \min F_{n,s}$ and there is no $z \in G_{s}$ 
connected to all the nodes in $F_{n,s}$. In addition, $x_{n,s} \leq x_{n,s+1}$, and, by $\Sigma^0_1$ induction on $s$, with $x$ as a parameter, we have that the condition 
$\forall y \phi(n,x,y)$ implies that $x_{n,s} \leq x$.

Suppose $\psi(n)$ holds. By $\mathsf{L}\Pi^0_1$ (which holds in $\mathsf{RCA}_0$), there is a least $x$ such that $\forall y \phi(n,x,y)$. Since $x$ is chosen least, 
$(\forall u<x) \exists y \neg \phi(n,u,y)$ holds, and by $\mathsf{B}\Sigma^0_0$, there is a $t$ such that 
$(\forall u<x) (\exists y < t) \neg \phi(n,u,y)$. Without loss of generality, $t > n$. We claim that $x_{n,t} = x$. If $x_{n,t-1} = x$, then we are done since 
$x_{n,t-1} \leq x_{n,t} \leq x$ by the previous paragraph. If $x_{n,t-1} < x$, then  
$n \in Y_{t}$ and we set $x_{n,t} = x$ because $x$ is the least such that  $(\forall y \leq t) \phi(n,x,y)$. 

It follows from the claim that $x_{n,s} = x_{n,t} = x$ for every $s \geq t$. Therefore, $\lim_s x_{n,s} = x$. 
Moreover, setting $F_n = F_{n,t}$, we have $F_{n,s} = F_n$ for all $s \geq t$ and so $\lim_s F_{n,s} = F_n$. 

Similarly, if $\neg \psi(n)$ holds, then the values of $x_{n,s}$ are unbounded as $s$ goes to infinity, as are the values 
of the minimum elements of $F_{n,s}$. 

To complete the proof, we show that for all $n$, $(\exists m \leq n) \psi(m)$ holds if and only if there is an $m \leq n$ and an $m$-extension pair 
$\langle X_0, X_1 \rangle$ that fails to have the extension property in $G$. 

For the forward direction, fix $n \geq 1$ such that $(\exists m \leq n) \psi(m)$ holds and fix $m \leq n$ with $\psi(m)$. By $\mathsf{L}\Pi^0_1$,  
fix the least $t$ such that $F_{m,s} = F_{m,t}$ for all $s \geq t$ and let $F_m = F_{m,t}$. We show the extension property fails in $G$ for $\langle F_m, \emptyset 
\rangle$. It suffices to show by $\Sigma^0_0$ induction that for all $s \geq t$, there is no node in $G_s$ connected to all the nodes in $F_m$. 

For $s=t$, since $t$ is chosen least, the set $F_{m,t} \neq F_{m,t-1}$ consists of $m$ many new elements added to $G_t$ none of which are connected to nodes in 
$G_t$. Therefore, the condition holds for $s=t$. 

Assume the condition holds for a fixed $s \geq t$. A new node $v \in G_{s+1}$ is only connected to a node in $G_s$ when $v$ is 
used to witness the extension property for a $k$-extension pair $\langle X_0, X_1 \rangle$ with $k \in Y_{s+1}$. In this case, $v$ is only  
connected to the nodes in $X_0$. If $k < m$, then $|X_0| \leq k$, so $v$ cannot be connected to every node in $F_m$. 
If $k > m$, then by construction, we have $F_{m,s} \not \subseteq X_0$ and hence $v$ is not connected to every node in $F_m$. 

For the backward direction, assume $(\forall m \leq n) \neg \psi(m)$. Fix $m \leq n$ and an $m$-extension pair $\langle X_0, X_1 \rangle$ in $G$. We need to
show this pair has the extension property. Fix $t_0$ such that $X_0, X_1 \subseteq G_{t_0}$. 
Suppose $X_0 = \emptyset$. In this case, we need to show there is a node $v \in G$ which is not connected 
to any node in $X_1$. However, at each stage $s > 0$, we add new elements to $G_s$ that are not connected to any node in $G_{s-1}$. 

Suppose $X_0 \neq \emptyset$ and let $\ell$ be the least element of $X_0$. Since $(\forall k < m) \neg \psi(k)$, 
we have $(\forall k < m) (\forall x \leq \ell) \exists y \neg \phi(k,x,y)$. By $\mathsf{B}\Sigma^0_0$, we can fix $t > t_0$ such that 
$(\forall k < m) (\forall x \leq \ell) (\exists y < t) \neg \phi(k,x,y)$. It follows that $x_{k,t} > \ell$ for all $k < m$, and hence $\min F_{k,t} > \ell$ for all 
$k < m$. In particular, for all $s \geq t$ and $k < m$, $F_{k,s} \not \subseteq X_0$. Therefore, for any $s > t$ at which we act for $m$, we add 
an extension witness for the pair $\langle X_0, X_1 \rangle$ as required.

\end{proof}

\end{document}